\newtheorem{thm}{Theorem}%[section]
\newtheorem{lem}{Lemma}
\newtheorem{cor}{Corollary}
\theoremstyle{definition}
\newtheorem{remark}{Remark}
\newtheorem{examples}{Examples}
\newcommand\mynobreakpar{\par\nobreak\@afterheading} 
\newcommand{\R}{\mathbb{R}}
\newcommand{\beq}{\begin{equation*}}
\newcommand{\eeq}{\end{equation*}}
\newcommand{\beqn}{\begin{equation}}
\newcommand{\eeqn}{\end{equation}}
\newcommand{\dd}{\mathrm{d}}
\newcommand{\ds}{\displaystyle}
\newcommand{\ph}{\varphi}
\newcommand{\rh}{\varrho}
\newcommand{\D}{\varDelta}
\newcommand{\La}{\varLambda}
\newcommand{\la}{\lambda}
\newcommand{\db}{\displaybreak[0]}
\newcommand{\Po}{\mathcal{P}_{n,\;\!r}}
\newcommand{\Cr}{\mathcal{C}_r}
\newcommand{\E}{\mathbb{E}}
\newcommand{\Var}{\mathrm{Var}}
\newcommand{\ii}{\mathrm{i}}
\newcommand{\ee}{\mathrm{e}}
\newcommand{\FL}{F_{n,\:\!r}}
\newcommand{\FD}{G_{n,\:\!r}}
\newcommand{\fL}{f_{n,\:\!r}}
\newcommand{\fD}{g_{n,\:\!r}}
\newcommand{\Ga}{\varGamma}
\newcommand{\Ht}{\widetilde{H}}
\newcommand{\p}{\partial}
\newcommand{\AP}{A(\Po)}
\newcommand{\LP}{L(\p\:\!\Po)}
\newcommand{\psit}{\widetilde{\psi}}
\newcommand{\sigmat}{\widetilde{\sigma}}
\newcommand{\taut}{\widetilde{\tau}}
\newcommand{\g}{\mathfrak{g}}
\DeclareMathOperator{\arcosh}{arcosh}
\begin{document}
% !TeX root = MDRP_0.tex

\title{The moments of the distance between two random points\\ in a regular polygon}
\author{Uwe Bäsel}
\date{\today,\;\currenttime}
\date{} 
\maketitle
\thispagestyle{empty}
\begin{abstract}
\noindent
In this paper, we derive formulas for the analytical calculation of the moments of the distance between two uniformly and independently distributed random points in an $n$-sided regular polygon. A number of closed form expressions is provided, e.g. the expected distances for $n = 3,4,5,6,8,10,12$, where the results for $n = 5,8,10,12$ are new to the best of the author's knowledge.
Applying results of Voss, remarkably short formulas for the second and the fourth moments are derived.\\[0.2cm]
\textbf{2010 Mathematics Subject Classification:}
60D05, % Geometric probability and stochastic geometry
52A22, % Random convex sets and integral geometry
53C65  % Integral geometry
\\[0.2cm]
\textbf{Keywords:} Geometric probability, random distance, mean distance, moments of random distances, chord length distribution function, distance distribution function, regular polygons, chord power integrals 
\end{abstract}

\section{Introduction}

Let $\Po$, $n = 3,4,\ldots$, be the regular polygon with $n$ sides and circumscribed circle with radius~$r$.
The distance between two points chosen independently and uniformly from $\Po$ is a random variable, which we denote by $\D_{n,\:\!r}$.
The moments of $\D_{n,\:\!r}$ are given by
\beq
  M_m(\Po)
:= \E[\D_{n,\:\!r}^m] 
= \int_0^d x^m\, \dd\FD(x)\,,
\eeq
where $G_{n,\:\!r}$ is the cumulative distribution function of $\D_{n,\:\!r}$, and $d$ is the diameter of $\Po$.
The first moment $M_1(\Po)$ is the {\em mean distance} between two (random) points in $\Po$.

\textcite{Czuber1884} found the mean values $M_1(\mathcal{P}_{3,\:\!r})$ (p.\ 206) and $M_1(\mathcal{P}_{4,\:\!r})$ (pp.\ 202-204, Problem XV) for the equilateral triangle and the square, respectively, in terms of the side length $a = 2r\sin(\pi/n)$. (Czuber even found the mean distance in a rectangle. Concerning equilateral triangle, square and rectangle see also \textcite[p.\ 49]{Santalo}.)
\textcite{Ghosh} derived the distance distribution for a rectangle.
\textcite{Sulanke} found the distribution function $G_{3,\:\!r}$ for the equilateral triangle in terms of the side length (see also \textcite{Duma&Stoka08}).    
The density function $\fD$ and the distribution function $\FD$ for any regular  polygon were obtained by \textcite{Baesel:Random_chords}.
An example for the simulation of point distances is shown in Fig.\ \ref{Abb:Simulation}.

\textcite{Hammersley} derived the distribution of the distance in a hypersphere.
\textcite{Bailey&Borwein&Crandall:Box} studied the mean distance in the unit $n$-cube giving closed form expressions for the cases $n=1,\ldots,4$ (see also \textcite{Borwein&Crandall}, especially Example 14).
The density function of the distance between two points in a three-dimensional box was found by \textcite{Philip:Box}.
Furthermore, \textcite{Philip} found the density function for a 4-cube and a 5-cube. 

Although the density function and the distribution function provide much more information than the mean value, there is a constant interest in mean values; see e.\,g.\ \textcite{Dunbar}, and \textcite{Burgstaller&Pillichshammer}.
Recently, \textcite{Bonnet&Gusakova&Thaele&Zaporozhets} obtained sharp inequalities for the mean distance in convex bodies.

\begin{SCfigure}[][h]
  \includegraphics[width=0.5\textwidth]{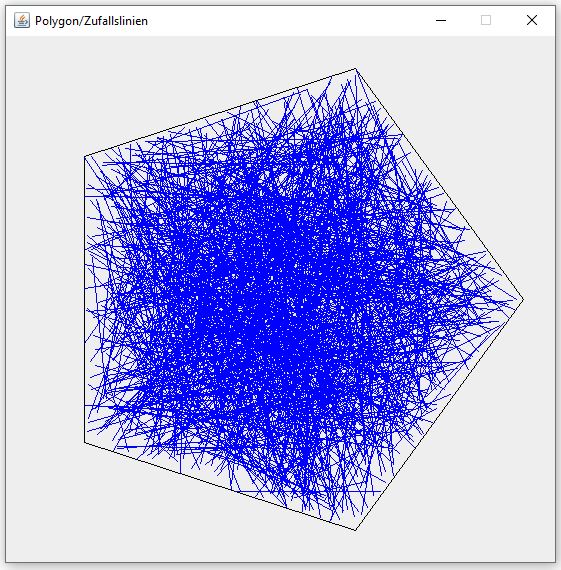}
  \caption{Simulation of 1000 line segments each as a connection of two random points in $\mathcal{P}_{5,\:\!1}$ using PolygonV111.jar \autocite{Achtelstaedter&Huebener_Polygon}\\[0.2cm]
  Here the empirical values of $M_1(\mathcal{P}_{5,\:\!1})$ and the variance $\Var[\D_{5,\:\!1}]$ are 0.7911 (cf.\ Tab.\ \ref{Ta:Moments-P_5}) and 0.1316, respectively. Range: 1.6795}
  \label{Abb:Simulation}
\end{SCfigure}

Without loss of generality one may assume that the centroid of $\Po$ is located at the origin $O$ of a cartesian $x,y$-coordinate system, and the vertices of $\Po$ are given by
\beqn \label{Eq:X_k}
  X_k = x_k + \ii y_k = r\ee^{2\ii k\alpha}\,,\quad
  k = 0,1,\dots,n-1,
\eeqn
with angle
\beq
  \alpha
= \frac{1}{2}\:\measuredangle\left(\vv{OX_0},\vv{OX_1}\right)
= \frac{\pi}{n}\,.
\eeq 
The distance $\ell_k$ between vertex $X_0$ and vertex $X_k$ is for $k = 1,2,\ldots,n-1$ given by
\beqn \label{Eq:ell_k}
  \ell_k
= r\,\sqrt{\strut\left(\cos(2k\alpha)-1\right)^2 + \sin^2(2k\alpha)}
= 2r\sin(k\alpha)\,.  
\eeqn 
Clearly, $\ell_1 = 2r\sin\alpha$ is the side length, and, denoting by $\lfloor\cdot\rfloor$ the integer part of $\,\cdot\,$,
\beqn \label{Eq:d}
  d :=\ell_{K+1} \quad\mbox{with}\quad K := \lfloor n/2 \rfloor - 1
\eeqn
is the diameter of $\Po$.  
The length $\LP$ of the boundary $\p\:\!\Po$ and the area $\AP$ of $\Po$ are respectively
\beqn \label{Eq:L_and_A}
  \LP
= n\,\ell_1
= 2nr\sin\alpha\qquad\mbox{and}\qquad  
  \AP
= n\,\frac{\ell_1}{2}\,r\cos\alpha 
= \frac{1}{2}\,n r^2\sin(2\alpha)\,.
\eeqn
For $\Po$ with odd $n$ we denote by $\la$ the distance between the vertex $X_0$ and the side $\overline{X_{K+1}X_{K+2}}$,
\beqn \label{Eq:lambda}
  \la
= r + r\cos\alpha
= 2r\cos^2(\alpha/2)\,. 
\eeqn
We will also use \eqref{Eq:lambda} for $\Po$ with even $n$.
Here $\lambda$ is the sum of $r = \left|\overline{OX_0}\right|$ and the distance between $O$ and side $\overline{X_KX_{K+1}}$ (and also $\overline{X_{K+1}X_{K+2}}$\,).
For $\Po$, $n = 3,4,5,\ldots$, we have
\beq
  \ell_K < \la < \ell_{K+1} = d\,.
\eeq  

The paper is organized as follows:

\begin{itemize}[leftmargin=0.5cm]
\setlength{\itemsep}{-2pt}
%%%%%%%%%%
\item In Section \ref{Sec:CLD} we provide a simple and clear form of the chord length distribution function $\FL$ derived in \textcite{Baesel:Random_chords}.
This simple form is completely given in Theorem \ref{Thm:CLD}.
%%%%%%%%%%
\item For the point distance density function $\fD$ and the moments $M_m(\Po)$ required antiderivatives are provided in Section \ref{Sec:Antiderivatives}.  
%%%%%%%%%%
\item From $\FL$ we conclude in Section \ref{Sec:PDF} a simple and clear form of the point distance density function $\fD$ derived in \textcite{Baesel:Random_chords}.
%%%%%%%%%%
\item Formulas for analytically calculating the distance moments $M_m(\Po)$, $m = -1,0,1,2,\ldots$, are derived from $\FL$ in Section \ref{Sec:Moments_distance}.
%%%%%%%%%%
\item Closed form expressions for the mean distances $M_1(\Po)$ and a number of further moments are given for $n = 3,4,5,6,8,10,12$ in Section \ref{Sec:Closed_form_expressions}.
According to the author's knowledge, cases $n = 5,8,10,12$ are new results.
%%%%%%%%%%
\item High precision values with respectively 75 and 76 digits for $M_1(\Po)/r$, $n = 3,4,\ldots,30$, and $M_m(\mathcal{P}_{5,\:\!r})/r^m$, $m = -1,0,1,2,\ldots,10$, are given in Section \ref{Sec:Numerical_values}.
%%%%%%%%%%
\item Using a result from \textcite{Czuber1884}, in Section \ref{Sec:Second_moment} a short formula for the second moments $M_2(\Po)$ is derived.
This makes it easier to calculate the variances $\Var[\D_{n,\:\!r}]$.
Especially in all cases, where a (short) closed form expression for the mean distance $M_1(\Po)$ is found, one has therefore also a closed form expression for the variance.
%%%%%%%%%%
\item Applying a result from \textcite{Voss82}, we obtain in Section \ref{Sec:Fourth_moment} a short formula for the fourth moments $M_4(\Po)$.
For this purpose it is used that the value of the second area moment of $\Po$ with respect to an axis through the centroid of $\Po$ is independent from the direction of this axis.
%%%%%%%%%%
\item Since clearly the moments for the circle $\Cr$ with radius $r$ are the limit values of the moments for $\Po$ as $n$ tends to infinity, $\lim_{n\rightarrow\infty}M_m(\Po) = M_m(\Cr)$, in Section \ref{Sec:Circle} the already known moments are derived as values for comparison.
(Comparisons have already been done in previous sections.)
%%%%%%%%%%
\item Finally, we point out the possibility of the simple derivation of chord power integrals for $\Po$ from the present results (Section \ref{Sec:CPI}). 
\end{itemize}
% !TeX root = MDRP_0.tex

\section{The chord length distribution function}
\label{Sec:CLD}

A straight line $\g$ in the plane is in Hesse normal form determined by the angle $\ph$, $0\le\ph<2\pi$, between the positive $x$-axis and the direction perpendicular to $\g$, and by its distance $p$, $0\le p<\infty$, from the origin~$O$,
\beq
  \g
= \g(p,\ph)
= \left\{(x,y)\in\R^2 \colon x\cos\ph + y\sin\ph = p\right\}
\eeq
(see \textcite[p.\ 2]{Santalo}).
We assume $p$ and $\ph$ to be independent random variables uniformly distributed in $[0,r]$ and $[0,2\pi)$, respectively, and consider only lines $\g$ with $\g\cap\Po\ne\emptyset$.
Such a line $\g$ produces a chord of length $\La_{n,\:\!r}$, $0 \le \La_{n,\:\!r} \le d = \ell_{K+1}$ (see \eqref{Eq:ell_k}, \eqref{Eq:d}).
We denote by $\FL$ the distribution function of the random variable $\La_{n,\:\!r}$, $\FL(x) = P(\La_{n,\,r} \le x)$.

\begin{thm} \label{Thm:CLD}
With
\begin{gather*}
  \alpha
= \pi/n\,,\qquad
  K = \lfloor n/2\rfloor - 1\,,\qquad
  \ell_k
= 2r\sin(k\alpha)\,,\qquad
  \la = 2r\cos^2(\alpha/2)\,,\\[0.1cm]
  h_k
= 2r\sin(k\alpha)\sin((k+1)\alpha)\,,\qquad
  p_k 
= \left\{\!\begin{array}{c@{\quad\mbox{if}\quad}l}
	0 & n = 3\,,\\[0.1cm]  
	\cot(2k\alpha) - \cot(2(k+1)\alpha) & n > 3\,,
  \end{array}\right.\\[0.1cm]  	
  q_k
= \tan(k\alpha) - \tan((k+1)\alpha)\,,\qquad
  s_k 
= \left\{\!\begin{array}{c@{\quad\mbox{if}\quad}l}
	1/2 & n = 3\,,\\[0.1cm]  
	k\,\alpha\,p_k & n > 3\,,
  \end{array}\right.\\[0.1cm]
  \sigma_{\mu,\:\!a}(x)
= x^\mu\arcsin\frac{a}{x}\,,\qquad
  \tau_{\mu,\:\!a}(x)
= x^\mu\,\sqrt{x^2-a^2}\,,
\end{gather*}
the chord length distribution function $\FL$ of the regular polygon $\Po$, $n = 3,4,\ldots$, is given by
\beq
  \FL(x) = P(\La_{n,\,r} \le x)
= \left\{
  \begin{array}{c@{\quad\mbox{if}\quad}l}
	0 & -\infty<x<\ell_0=0\,,\\[0.15cm]
	1 - H_k(x)/\ell_1  & \ell_k\leq x<\ell_{k+1} \;\;\mbox{for}\;\; k=0,\ldots,K,\\[0.15cm]
	1 & \ell_{K+1}\leq x<\infty
  \end{array}\right. 
\eeq
with
\beq
  H_k(x) = \left\{
  \begin{array}{ll}
	\ell_1 - \left[1+\alpha\left(\tan\alpha-\cot\alpha\right)\right]x/2 &
	%% if %% 
	\hspace{-5cm}\mbox{if}\quad k=0\,\wedge\,x<\la\,,\\[0.3cm]
	%%%%%%%%%%%%%%%%%%%%%%%%%%%%%%%%%%%%%%%%%%%%%%%%%%%%%%%%%%%%%%%%%%%%%%%%%%%%%%%%%%%%%%%%%%%%%%%%%%%%%%%%%%%%
	p_k\,\sigma_{1,\:\!h_k}(x)-\left[k\alpha\cot(2k\alpha)
	- (k+1)\alpha\cot(2(k+1)\alpha)\right]x\\[0.15cm]
	- \left(h_k\,p_k+2rq_k\cos\alpha\right)\tau_{-1,\:\!h_k}(x) &
	%% if %%
	\hspace{-5cm}\mbox{if \, $(n$ is even $\,\wedge\,$ $1 \le k \le K-1)$}\;\,\vee\\[0.15cm]
	& \hspace{-4.4cm}\mbox{$(n$ is odd $\,\wedge\,$ $1 \le k \le K$ $\,\wedge\,$ $x<\la)$}\,,\\[0.3cm]
	%%%%%%%%%%%%%%%%%%%%%%%%%%%%%%%%%%%%%%%%%%%%%%%%%%%%%%%%%%%%%%%%%%%%%%%%%%%%%%%%%%%%%%%%%%%%%%%%%%%%%%%%%%%%
	\left[1/2-k\alpha\cot(2k\alpha)\right]x
	+ \cot(2k\alpha)\,\sigma_{1,\:\!h_k}(x) - \left(h_k-2r\cos\alpha\right)h_k/x\\[0.15cm]
	- \left[h_k\cot(2k\alpha)+2r\cos\alpha\tan(k\alpha)\right]\tau_{-1,\:\!h_k}(x) &
	%% if &&
	\hspace{-5cm}\mbox{if}\quad \mbox{$n$ is even}\,\wedge\,k=K\,,\\[0.3cm]
	%%%%%%%%%%%%%%%%%%%%%%%%%%%%%%%%%%%%%%%%%%%%%%%%%%%%%%%%%%%%%%%%%%%%%%%%%%%%%%%%%%%%%%%%%%%%%%%%%%%%%%%%%%%%
	p_k\,\sigma_{1,\:\!h_k}(x) + 2\cot(2(k+1)\alpha)\,p_k\,\sigma_{1,\:\!\la}(x)
	- \left[(\pi-\alpha)\cot(2(k+1)\alpha)+s_k\right]x\\[0.15cm]
	- \left(h_k\,p_k+2rq_k\cos\alpha\right)\tau_{-1,\:\!h_k}(x)
	- 2\cos\alpha\,[2r\cos(\alpha/2)\sec((k+1)\alpha)\\[0.15cm]
	- \,\la\csc(2(k+1)\alpha)]\,\tau_{-1,\:\!\la}(x) &
	%% if %%	
	\hspace{-5cm}\mbox{if}\quad\mbox{$n$ is odd}\,\wedge\,k=K\,\wedge\,x\ge\la\,.		
  \end{array}\right.
\eeq
\end{thm}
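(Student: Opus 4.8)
The plan is to compute $\FL$ directly from its integral-geometric meaning and then recognise the compact closed form; equivalently one may start from the (less transparent) expression obtained in \textcite{Baesel:Random_chords} and reduce it with elementary trigonometry, but I describe the direct route. Writing $\mu=\dd p\,\dd\ph$ for the kinematic measure on the lines $\g(p,\ph)$ and recalling that the total measure of lines meeting $\Po$ equals its perimeter $\LP=n\ell_1$ (Cauchy's formula for the measure of lines meeting a convex body), the conditioning on $\g\cap\Po\ne\emptyset$ makes the conditional law uniform in $\mu$, so that
\[
  1-\FL(x)=\frac{1}{\LP}\,\mu\bigl\{(p,\ph)\colon \g(p,\ph)\cap\Po\ne\emptyset,\ |\g\cap\Po|>x\bigr\}.
\]
Hence the asserted identity $\FL=1-H_k/\ell_1$ is equivalent to $H_k(x)=\tfrac1n\,\mu\{\text{chords longer than }x\}$, and the prefactor $1/n$ is exactly what the $n$-fold rotational symmetry of $\Po$ supplies: it suffices to compute the long-chord measure over one angular sector and to multiply by $n$, while the reflection symmetry halves the remaining bookkeeping.

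First I would fix the combinatorial type of a chord by the pair of edges its supporting line crosses. Rotating so that one endpoint lies on the side $\overline{X_0X_1}$ and letting the other lie on $\overline{X_jX_{j+1}}$, the four vertex distances $|X_0X_j|=\ell_j$, $|X_1X_{j+1}|=\ell_j$, $|X_1X_j|=\ell_{j-1}$, $|X_0X_{j+1}|=\ell_{j+1}$ bound the lengths attainable by this family. This is precisely why the breakpoints of $\FL$ fall at the values $\ell_k$: as $x$ moves through $[\ell_k,\ell_{k+1})$, the set of edge-pairs capable of producing a chord longer than $x$ changes by one type, which is what indexes $H_k$. The governing geometric constant is $h_k$: evaluating the area of the inscribed triangle $X_0X_kX_{k+1}$ in two ways (base $\ell_1$ versus the inscribed angle $\alpha$ at $X_0$) shows that $h_k=2r\sin(k\alpha)\sin((k+1)\alpha)$ is the distance from $X_0$ to the line carrying $\overline{X_kX_{k+1}}$.

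Next, for a fixed edge-pair I would describe the long chords geometrically. If $\psi$ is the angle a chord makes with the far side-line, the extremal chord of direction $\psi$ in this type passes through the vertex $X_0$ and has length $h_k/\sin\psi$, so the condition $|\g\cap\Po|>x$ translates into an explicit range for the chord direction bounded by $\arcsin(h_k/x)$, together with an interval of admissible offsets $p$ whose length is governed by $\sqrt{x^2-h_k^2}$. Integrating $\mu$ over this region therefore produces, as a function of the threshold $x$, exactly the combination $\sigma_{1,\:\!h_k}(x)=x\arcsin(h_k/x)$ and $\tau_{-1,\:\!h_k}(x)=\sqrt{x^2-h_k^2}/x$; the slope and intercept data of the two edge-lines enter only through the constants $p_k=\cot(2k\alpha)-\cot(2(k+1)\alpha)$ and $q_k=\tan(k\alpha)-\tan((k+1)\alpha)$, while a linear-in-$x$ remainder absorbs the affine part of the chord-length profile. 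The special rows of $H_k$ record the degenerations: $k=0$ is the short-chord corner regime (a chord cutting off a single vertex), whereas near the diameter $d=\ell_{K+1}$ the extreme chords run from a vertex across to the opposite side, at perpendicular distance $\la=r(1+\cos\alpha)$; this is the source of the auxiliary terms $\sigma_{1,\:\!\la}$, $\tau_{-1,\:\!\la}$ and of the threshold $x\gtrless\la$ in the odd-$n$, $k=K$ case, as well as of the separate even-$n$, $k=K$ row, where the line of $\overline{X_KX_{K+1}}$ is perpendicular to a diameter and the geometry simplifies.

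Finally I would assemble the sector contributions for each range $x\in[\ell_k,\ell_{k+1})$, divide by $n$, and reduce the resulting trigonometric sums to the displayed form. The main obstacle is not any single integral—each is an elementary $\arcsin$/square-root antiderivative—but the case analysis and its bookkeeping: separating the parity of $n$, isolating the boundary indices $k=0$ and $k=K$, pairing each $x$-range correctly with the edge-pairs that contribute a long chord (including the vertex-to-opposite-side family governed by $\la$), and carrying out the double-angle and product-to-sum simplifications on $\cot(2k\alpha)$, $\tan(k\alpha)$, $h_k$ and $s_k=k\alpha\,p_k$ that collapse everything into the stated $\sigma_{\mu,\:\!a}$, $\tau_{\mu,\:\!a}$ shorthand. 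Checking continuity of $\FL$ at each $\ell_k$ together with the boundary values $\FL(0)=0$ and $\FL(d)=1$ then confirms that the constants have been collected correctly.
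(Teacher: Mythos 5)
Your route is genuinely different from the paper's: the paper proves Theorem \ref{Thm:CLD} in one line, by citing Theorem 1 of \textcite{Baesel:Random_chords} and stating that the present form is obtained from it by inserting, rearranging and combining terms (plus splitting one case in two). You instead sketch a from-scratch integral-geometric derivation. Your scaffolding is correct: Cauchy's formula gives total line measure $\LP=n\ell_1$, so $1-\FL(x)$ is the normalized measure of lines with chord longer than $x$ and the claimed form is equivalent to $H_k(x)=\tfrac1n\,\mu\{\text{chords}>x\}$; the identification of $h_k=2r\sin(k\alpha)\sin((k+1)\alpha)$ as the distance from $X_0$ to the line of $\overline{X_kX_{k+1}}$ via the two area formulas for the triangle $X_0X_kX_{k+1}$ is right; the breakpoints at $\ell_k$, the role of $\la=r(1+\cos\alpha)$ near the diameter, and the provenance of the $\sigma_{1,\:\!a}$ and $\tau_{-1,\:\!a}$ terms from the $\arcsin$/square-root integrals over the admissible $(p,\ph)$ region are all the correct geometric picture. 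What the citation route buys is that the earlier paper has already done the measure computations; what your route would buy, if completed, is a self-contained proof.

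The gap is that you never complete it. The entire content of this theorem is the exact collection of coefficients in the four branches of $H_k$ --- the factors $p_k$, $h_kp_k+2rq_k\cos\alpha$, $k\alpha\cot(2k\alpha)-(k+1)\alpha\cot(2(k+1)\alpha)$, the $(\pi-\alpha)\cot(2(k+1)\alpha)+s_k$ term, the $\la$-terms in the odd-$n$ endgame, and so on --- and your proposal establishes none of them: no line measure is actually integrated for any edge-pair family, and the assembly over families, the parity split, and the $x\gtrless\la$ threshold are all deferred to ``bookkeeping.'' Since there is no structural reason forcing these particular constants (an error in any one of them would leave your outline untouched), the proposal as written is a plan for a proof rather than a proof. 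To make it stand on its own you would need to carry out the measure computation for at least one generic family and one boundary family ($k=K$, both parities) and verify that the constants collapse to the displayed ones; alternatively, adopt the paper's route and exhibit the algebraic reduction from the published Theorem 1 of \textcite{Baesel:Random_chords}. Your proposed final check (continuity of $\FL$ at each $\ell_k$ and the boundary values at $0$ and $d$) is a useful sanity test but does not determine the coefficients.
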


\begin{proof}
This is the chord length distribution function from \textcite[Theorem 1]{Baesel:Random_chords}, simplified by inserting, rearranging and combining terms. 
Furthermore, the former case
\beq
  (\mbox{$n$ is even}\,\wedge\,k\in\{0,\ldots,K-1\})\,\vee\,(\mbox{$n$ is odd}\,\wedge\,x<\la)
\eeq
is split into two cases.
\end{proof}

Examples for graphs of distribution functions $\FL$ are shown in Fig.\ \ref{Abb:CLDs}.

\begin{SCfigure}[][h]
  \includegraphics[width=0.65\textwidth]{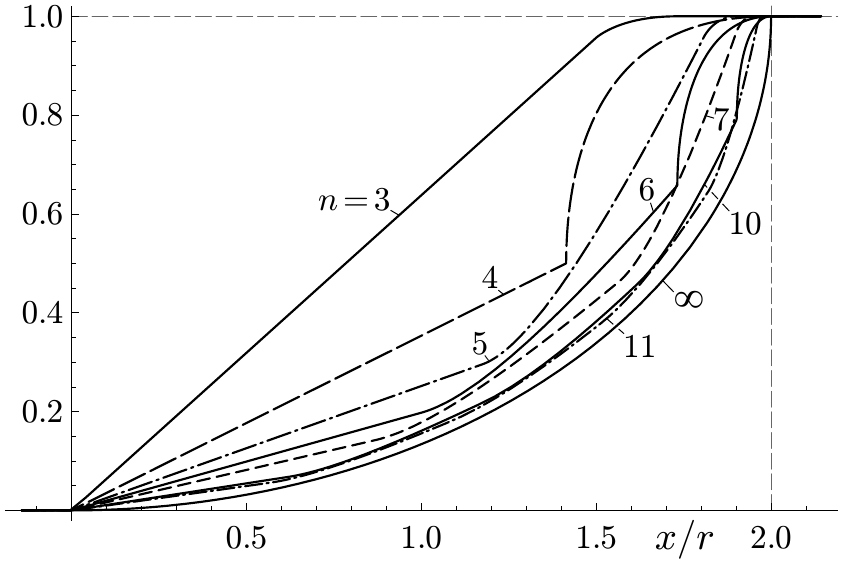}
  \caption{Graphs of distribution functions $\FL$, $n=3$, $4$, $5$, $6$, $7$, $10$, $11$, and limit distribution function (circle)}
  \label{Abb:CLDs}
\end{SCfigure}
% !TeX root = MDRP_0.tex

\section{Some antiderivatives}
\label{Sec:Antiderivatives}

\begin{lem} \label{Lem:psit}
For $\mu = -1,0,1,2,\ldots$, an antiderivative $\psit_{\mu,\:\!a}(x)$ of the function
\beq
  \psi_{\mu,\:\!a}
  \colon [a,\infty) \:\rightarrow\: \R\,,\quad
  x \:\mapsto\: \psi_{\mu,\:\!a}(x) := \frac{x^\mu}{\sqrt{x^2-a^2}}\,,\quad
  a > 0\,,  
\eeq
is given by
\beq
  \psit_{\mu,\:\!a}(x)
= \left\{
  \begin{array}{lcl}
    \ds{-\frac{1}{a}\arcsin\frac{a}{x}} & \mbox{if} & \mu = -1\,,\\[0.4cm]
    \ds{\ln\frac{x+\sqrt{x^2-a^2}}{a}} & \mbox{if} & \mu = 0\,,\\[0.4cm]
	\ds{\frac{x^{\mu-1}\,\sqrt{x^2-a^2}\;\gamma_{\mu,\:\!a}(x)}{\mu}} & \mbox{if} &
		\mu = 1,3,5,\ldots\,,\\[0.4cm]
	\ds{\frac{x^{\mu-1}\,\sqrt{x^2-a^2}\;\gamma_{\mu,\:\!a}(x)}{\mu}
	+ \frac{(\mu-1)!!\,a^\mu}{\mu!!}\,\ln\frac{x+\sqrt{x^2-a^2}}{a}} &
	\mbox{if} & \mu = 2,4,6,\ldots\,,
  \end{array}
  \right.	  
\eeq
where
\beq
  \gamma_{\mu,\:\!a}(x)
= 1 + \sum_{\nu=1}^{\lfloor\frac{\mu-1}{2}\rfloor}\left(\frac{a}{x}\right)^{2\nu}
	\prod_{j=1}^\nu\frac{\mu+1-2j}{\mu-2j}\,.  
\eeq
\end{lem}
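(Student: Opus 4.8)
The plan is to reduce the whole statement to two base cases together with a single reduction formula, and then close the argument by induction on the two parity classes of $\mu$. For the anchors $\mu=-1$ and $\mu=0$ I would simply differentiate the claimed antiderivatives. Using $\frac{\dd}{\dd x}\arcsin(a/x) = -a/(x\sqrt{x^2-a^2})$ for $x>a>0$, one checks at once that the derivative of $-\frac{1}{a}\arcsin(a/x)$ equals $\psi_{-1,a}$; and $\frac{\dd}{\dd x}\ln\bigl(x+\sqrt{x^2-a^2}\bigr)=1/\sqrt{x^2-a^2}$ gives the case $\mu=0$ directly. The case $\mu=1$ is equally immediate, since $\gamma_{1,a}\equiv 1$ (the sum is empty) and $\frac{\dd}{\dd x}\sqrt{x^2-a^2}=x/\sqrt{x^2-a^2}$.

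The engine of the induction is a reduction formula obtained by differentiating $x^{\mu-1}\sqrt{x^2-a^2}$:
\beq
  \frac{\dd}{\dd x}\Bigl(x^{\mu-1}\sqrt{x^2-a^2}\Bigr)
= \frac{\mu\,x^\mu - (\mu-1)\,a^2 x^{\mu-2}}{\sqrt{x^2-a^2}}\,,
\eeq
which on rearranging and integrating yields, for every $\mu\ge 1$,
\beq
  \psit_{\mu,\:\!a}(x)
= \frac{x^{\mu-1}\sqrt{x^2-a^2}}{\mu}
+ \frac{(\mu-1)\,a^2}{\mu}\;\!\psit_{\mu-2,\:\!a}(x)\,.
\eeq
Because this links $\mu$ to $\mu-2$, the odd positive moments descend to the anchor $\mu=1$ and the even positive moments to $\mu=0$ (equivalently $\mu=2$). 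Taking the stated formula for $\mu-2$ as induction hypothesis, it remains only to verify that the right-hand side reproduces the stated formula for $\mu$.

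Substituting $\psit_{\mu-2,\:\!a}(x) = x^{\mu-3}\sqrt{x^2-a^2}\,\gamma_{\mu-2,\:\!a}(x)/(\mu-2)$ and factoring out $x^{\mu-1}\sqrt{x^2-a^2}/\mu$, the polynomial part matches exactly when
\beq
  \gamma_{\mu,\:\!a}(x)
= 1 + \frac{\mu-1}{\mu-2}\,\frac{a^2}{x^2}\,\gamma_{\mu-2,\:\!a}(x)\,.
\eeq
I would verify this identity straight from the definition of $\gamma$ by shifting the summation index $\nu\mapsto\nu+1$: the leading factor $(\mu-1)/(\mu-2)$ splits off the product $\prod_j(\mu+1-2j)/(\mu-2j)$, and the remaining factors reindex precisely to the product $\prod_j(\mu-1-2j)/(\mu-2-2j)$ appearing in $\gamma_{\mu-2,\:\!a}$. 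In the even case one inherits in addition the logarithmic term, whose coefficient propagates correctly because $\frac{(\mu-1)a^2}{\mu}\cdot\frac{(\mu-3)!!\,a^{\mu-2}}{(\mu-2)!!} = \frac{(\mu-1)!!\,a^\mu}{\mu!!}$, using $(\mu-1)!!=(\mu-1)(\mu-3)!!$ and $\mu!!=\mu(\mu-2)!!$; the base coefficient $a^2/2$ at $\mu=2$ agrees with $\frac{(\mu-1)!!}{\mu!!}a^\mu$ as well.

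The step I expect to require the most care is the bookkeeping of the summation range in $\gamma$: the upper limit $\lfloor(\mu-1)/2\rfloor$ increases by exactly one as $\mu$ increases by two, and the extra term this produces must be matched against the new highest power $(a/x)^{2\lfloor(\mu-1)/2\rfloor}$ generated by the factor $a^2/x^2$. Everything else is routine differentiation and reindexing.
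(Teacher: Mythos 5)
Your proof is correct, but it takes a genuinely different route from the paper. The paper handles $\mu=-1$ by the substitution $u=a/x$ and all $\mu\ge 0$ by the hyperbolic substitution $x=a\cosh u$, which reduces everything to $\int\cosh^\mu u\,\dd u$; it then simply quotes the tabulated reduction formulas 2.412.1 and 2.412.4 from Gradshteyn--Ryzhik and back-substitutes, so the combinatorial content of $\gamma_{\mu,\:\!a}$ and of the double-factorial coefficient is outsourced to the table. You instead work directly in the variable $x$: you verify the anchors $\mu=-1,0,1$ (and effectively $\mu=2$) by differentiation, derive the two-step reduction formula $\psit_{\mu,\:\!a}=x^{\mu-1}\sqrt{x^2-a^2}/\mu+(\mu-1)a^2\psit_{\mu-2,\:\!a}/\mu$ from $\frac{\dd}{\dd x}\bigl(x^{\mu-1}\sqrt{x^2-a^2}\bigr)$, and close the induction by checking the identity $\gamma_{\mu,\:\!a}=1+\frac{\mu-1}{\mu-2}\frac{a^2}{x^2}\gamma_{\mu-2,\:\!a}$ together with the propagation $\frac{(\mu-1)a^2}{\mu}\cdot\frac{(\mu-3)!!\,a^{\mu-2}}{(\mu-2)!!}=\frac{(\mu-1)!!\,a^\mu}{\mu!!}$; all of these steps check out, including the reindexing of the product and the growth of the summation range, and you correctly anchor the even chain at $\mu=2$ rather than at $\mu=0$ (where the general even formula would degenerate). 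What your approach buys is self-containedness and an explicit verification of the closed form as stated, at the cost of the index bookkeeping you flag; what the paper's approach buys is brevity, since the hyperbolic substitution converts the problem into a standard catalogued integral.
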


\begin{proof}
In the case $\mu = -1$ we have
\beq
  \psit_{-1,\:\!a}(x)
= \int\frac{\dd x}{x\,\sqrt{x^2-a^2}} 
= \int\frac{\dd x}{x^2\,\sqrt{1-(a/x)^2}}\,. 
\eeq
The substitution $u = a/x$, $\dd u = -a\,\dd x/x^2$ gives
\beq
  \psit_{-1,\:\!a}(x)
= -\frac{1}{a}\int\frac{\dd u}{\sqrt{1-u^2}}
= -\frac{1}{a}\arcsin u
= -\frac{1}{a}\arcsin\frac{a}{x}\,.  
\eeq
For $\mu = 0,1,2,\ldots$, the substitution
\beq
  x = a\cosh u\,,\qquad \sqrt{x^2-a^2} = a\sinh u\,,\qquad \dd x = a\sinh u\,\dd u
\eeq
yields
\beq
  \psit_{\mu,\:\!a}(x)
= \int\frac{x^\mu}{\sqrt{x^2-a^2}}\,\dd x
= a^\mu\int\cosh^\mu u\,\dd u\,.  
\eeq
For $\mu = 0$ it follows that
\beq
  \psit_{0,\:\!a}(x)
= \int\dd u
= u
= \arcosh\frac{x}{a}
= \ln\frac{x+\sqrt{x^2-a^2}}{a}\,.  
\eeq
For $\mu = 1,3,5,\ldots$ from Eq.\ 2.412.4 in \textcite[Vol.\ 1, p.\ 127]{Gradstein&Ryshik-engl} we get
\begin{align} \label{Eq:135}
  \int\cosh^\mu u\:\dd u
= {} & \frac{\sinh u}{\mu}\left[\cosh^{\mu-1}u + \sum_{\nu=1}^{\frac{\mu-1}{2}}
  \left(\prod_{j=1}^\nu\frac{\mu+1-2j}{\mu-2j}\right)\cosh^{\mu-1-2\nu}u\right]\db\nonumber\\[0.05cm]
= {} & \frac{\cosh^{\mu-1}u\,\sinh u}{\mu}\left(1 + \sum_{\nu=1}^{\frac{\mu-1}{2}}\frac{1}{\cosh^{2\nu}u}\,
  \prod_{j=1}^\nu\frac{\mu+1-2j}{\mu-2j}\right),
\end{align}
whereas for $\mu = 2,4,6\ldots$ from Eq.\ 2.412.1 in \autocite[Vol.\ 1, p.\ 127]{Gradstein&Ryshik-engl} we get
\begin{align} \label{Eq:246}
  \int\cosh^\mu u\:\dd u
= {} & \frac{\cosh^{\mu-1}u\,\sinh u}{\mu}\left(1 + \sum_{\nu=1}^{\frac{\mu}{2}-1}\frac{1}{\cosh^{2\nu}u}\,
  \prod_{j=1}^\nu\frac{\mu+1-2j}{\mu-2j}\right) + \frac{(\mu-1)!!}{\mu!!}\,u\,.
\end{align}
The back substitution
\begin{gather*}
  \cosh u
= \frac{x}{a}\,,\quad
  \sinh u
= \frac{\sqrt{x^2-a^2}}{a}\,,\quad
  u
= \mathrm{arcosh}\,\frac{x}{a}
= \ln\left(\frac{x}{a} + \sqrt{\left(\frac{x}{a}\right)^2-1}\,\right)
\end{gather*}  
in \eqref{Eq:135} and \eqref{Eq:246} yields $\psit_{\mu,\:\!a}(x)$ for the remaining cases in Lemma \ref{Lem:psit}.
\end{proof}

\begin{examples} \label{Examples:psit}
From Lemma \ref{Lem:psit} one gets the following special cases
\begin{align*}
  \psit_{1,\:\!a}(x)
= {} & \sqrt{x^2-a^2}\,,\db\\[0.05cm]
  \psit_{2,\:\!a}(x)
= {} & \frac{1}{2}\left(x\,\sqrt{x^2-a^2} + a^2\ln\frac{x+\sqrt{x^2-a^2}}{a}\right),\db\\[0.05cm]
  \psit_{3,\:\!a}(x)
= {} & \frac{1}{3}\left(x^2+2a^2\right)\sqrt{x^2-a^2}\,,\db\\[0.05cm]
  \psit_{4,\:\!a}(x)
= {} & \frac{1}{8}\left[x\left(2x^2+3a^2\right)\sqrt{x^2-a^2} 
  + 3a^4\ln\frac{x+\sqrt{x^2-a^2}}{a}\right],\db\\[0.05cm]
  \psit_{5,\:\!a}(x)
= {} & \frac{1}{15}\left(3x^4+4a^2x^2+8a^4\right)\sqrt{x^2-a^2}\,,\db\\[0.05cm]
  \psit_{6,\:\!a}(x)
= {} & \frac{1}{48}\left[x\left(8x^4+10a^2x^2+15a^4\right)\sqrt{x^2-a^2} 
  + 15a^6\ln\frac{x+\sqrt{x^2-a^2}}{a}\right].       
\end{align*}
\end{examples}

\begin{remark}
The integral
\beq
  \int\frac{x^{\mu-1}}{\sqrt{1-(a/x)^2}}\,\dd x
= \int\frac{x^\mu}{\sqrt{x^2-a^2}}\,\dd x
\eeq
is also used and discussed in the context of computing the moments of a point distance associated with triangles in \textcite[p.\ 4]{Li&Qiu}.
\end{remark}

\begin{lem} \label{Lem:sigmat}
For $\mu = 0,1,2,\ldots$, an antiderivative $\sigmat_{\mu,\:\!a}(x)$ of the function
\beq
  \sigma_{\mu,\:\!a}
  \colon [a,\infty) \:\rightarrow\: \R\,,\quad
  x \:\mapsto\: \sigma_{\mu,\:\!a}(x) := x^\mu\arcsin\frac{a}{x}\,,\quad
  a \ge 0\,,  
\eeq
(from Theorem \ref{Thm:CLD}) is given by
\beq
  \sigmat_{\mu,\:\!a}(x)
= \left\{
  \begin{array}{ccl}
	\dfrac{1}{\mu+1}\left(x^{\mu+1}\arcsin\dfrac{a}{x} + a\,\psit_{\mu,\:\!a}(x)\right) &
	\mbox{if} & a > 0\,,\\[0.4cm] 
	0 & \mbox{if} & a = 0\,,
  \end{array}
  \right.
\eeq
with $\psit_{\mu,\:\!a}(x)$ according to Lemma \ref{Lem:psit}.
\end{lem}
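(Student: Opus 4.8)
The plan is to prove this by integration by parts and then to treat the degenerate value $a = 0$ separately. For $a > 0$ I would set $u = \arcsin(a/x)$ and $\dd v = x^\mu\,\dd x$, so that $v = x^{\mu+1}/(\mu+1)$. The key computation is the derivative
\beq
  \frac{\dd}{\dd x}\arcsin\frac{a}{x}
= \frac{1}{\sqrt{1-(a/x)^2}}\left(-\frac{a}{x^2}\right)
= -\frac{a}{x\,\sqrt{x^2-a^2}}\,,
\eeq
where the simplification $\sqrt{1-(a/x)^2} = \sqrt{x^2-a^2}\,/\,x$ uses $x \ge a > 0$ on the domain $[a,\infty)$.

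With this, integration by parts gives
\beq
  \int x^\mu\arcsin\frac{a}{x}\,\dd x
= \frac{x^{\mu+1}}{\mu+1}\arcsin\frac{a}{x}
+ \frac{a}{\mu+1}\int\frac{x^\mu}{\sqrt{x^2-a^2}}\,\dd x\,.
\eeq
The remaining integral is exactly $\int\psi_{\mu,\:\!a}(x)\,\dd x$, so by Lemma \ref{Lem:psit} it equals $\psit_{\mu,\:\!a}(x)$; factoring out $1/(\mu+1)$ then yields the claimed formula. For the degenerate case $a = 0$ one has $\sigma_{\mu,\:\!0}(x) = x^\mu\arcsin 0 = 0$ identically, so any constant---in particular $\sigmat_{\mu,\:\!0}(x) = 0$---is an antiderivative; this case must be separated because both the factor $a$ and the denominator $\sqrt{x^2-a^2}$ in the $a > 0$ branch degenerate.

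There is essentially no serious obstacle: the only point requiring care is the sign together with the square-root simplification in the derivative of $\arcsin(a/x)$. Equivalently, one may verify the identity by differentiating the proposed $\sigmat_{\mu,\:\!a}$ directly---the derivative of the $\arcsin$ term cancels exactly against $a\,\psi_{\mu,\:\!a}(x)$, leaving $(\mu+1)\,x^\mu\arcsin(a/x)$, which after division by $\mu+1$ recovers $\sigma_{\mu,\:\!a}(x)$.
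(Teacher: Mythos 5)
Your proof is correct and follows the same route as the paper: integration by parts with $u = \arcsin(a/x)$, $\dd v = x^\mu\,\dd x$, reducing the remaining integral to $\psit_{\mu,\:\!a}$ from Lemma \ref{Lem:psit}, with the trivial case $a=0$ handled separately. The paper states this in one line; you have merely spelled out the derivative computation and sign bookkeeping explicitly.
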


\begin{proof}
Clearly, $\sigmat_{\mu,\:\!0}(x) \equiv 0$.
If $a > 0$, then integration by parts gives
\beq
  \sigmat_{\mu,\:\!a}(x)
= \int x^\mu\arcsin\frac{a}{x}\:\dd x
= \frac{1}{\mu+1}\left(x^{\mu+1}\arcsin\frac{a}{x} + a\,\psit_{\mu,\:\!a}(x)\right). \qedhere   
\eeq
\end{proof}

\begin{lem} \label{Lem:taut}
For $\mu = -1,0,1,2,\ldots$, an antiderivative $\taut_{\mu,\:\!a}(x)$ of the function
\beq
  \tau_{\mu,\:\!a}
  \colon [a,\infty) \:\rightarrow\: \R\,,\quad
  x \:\mapsto\: \tau_{\mu,\:\!a}(x) := x^\mu\,\sqrt{x^2-a^2}\,,\quad
  a \ge 0\,,  
\eeq
(from Theorem \ref{Thm:CLD}) is given by
\begin{align*}
  \taut_{\mu,\:\!a}(x)
= {} & \left\{
  \begin{array}{ccl}
	\ds{\frac{1}{\mu+2}\left(x^{\mu+1}\,\sqrt{x^2-a^2} - a^2\,\psit_{\mu,\:\!a}(x)\right)} &
	\mbox{if} & a > 0\,,\\[0.4cm] 
	\ds{\frac{x^{\mu+2}}{\mu+2}} & \mbox{if} & a = 0\,,
  \end{array}
  \right.	  
\end{align*}
with $\psit_{\mu,\:\!a}(x)$ according to Lemma \ref{Lem:psit}.
\end{lem}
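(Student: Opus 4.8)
The plan is to proceed exactly as in the proof of Lemma~\ref{Lem:sigmat}: split off the degenerate case $a = 0$, and for $a > 0$ reduce everything to the already-established antiderivative $\psit_{\mu,\:\!a}$ of Lemma~\ref{Lem:psit}, either by a single integration by parts or by direct differentiation of the proposed formula.

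For $a = 0$ the claim is immediate: on the domain $[0,\infty)$ one has $\tau_{\mu,\:\!0}(x) = x^\mu\sqrt{x^2} = x^{\mu+1}$, whose antiderivative is $x^{\mu+2}/(\mu+2)$, as stated.

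For $a > 0$ I would integrate $I := \int x^\mu\sqrt{x^2-a^2}\,\dd x$ by parts with $u = \sqrt{x^2-a^2}$ and $\dd v = x^\mu\,\dd x$, so that $\dd u = x/\sqrt{x^2-a^2}\,\dd x$ and $v = x^{\mu+1}/(\mu+1)$. The key step is the algebraic identity
\beq
  \frac{x^{\mu+2}}{\sqrt{x^2-a^2}}
= x^\mu\sqrt{x^2-a^2} + \frac{a^2 x^\mu}{\sqrt{x^2-a^2}}\,,
\eeq
which lets the integral arising from integration by parts be rewritten in terms of $I$ itself and of $\psit_{\mu,\:\!a}(x) = \int x^\mu/\sqrt{x^2-a^2}\,\dd x$. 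This produces the self-referential relation $\tfrac{\mu+2}{\mu+1}\,I = \tfrac{1}{\mu+1}\bigl(x^{\mu+1}\sqrt{x^2-a^2} - a^2\psit_{\mu,\:\!a}(x)\bigr)$, and solving for $I$ gives precisely the stated expression.

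Alternatively, and perhaps more transparently, I would simply differentiate the proposed $\taut_{\mu,\:\!a}$ and invoke $\psit_{\mu,\:\!a}{}' = \psi_{\mu,\:\!a}$ from Lemma~\ref{Lem:psit}; the same identity then collapses the two fractional terms, and the derivative reduces to $x^\mu\sqrt{x^2-a^2} = \tau_{\mu,\:\!a}(x)$. There is no genuine obstacle here: the only points requiring care are the bookkeeping of the factor $1/(\mu+2)$ and the sign in front of $a^2\psit_{\mu,\:\!a}$, both of which are dictated by the displayed identity above.
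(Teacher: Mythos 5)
Your proof is essentially correct, but the integration-by-parts route you present as the main argument breaks down at $\mu = -1$ --- a case the lemma explicitly includes and that the paper genuinely needs later ($\taut_{-1,\:\!a}$ enters the density function in Theorem \ref{Thm:PDF} and the moments in Theorem \ref{Thm:Moments}). With $\dd v = x^\mu\,\dd x$ you take $v = x^{\mu+1}/(\mu+1)$, which is undefined for $\mu = -1$ (there $v$ would have to be $\ln x$, and your self-referential relation with the factor $\tfrac{\mu+2}{\mu+1}$ collapses). The paper's own Remark directly after the lemma warns about exactly this pitfall: the ``closer form'' $\taut_{\mu,\:\!a}(x) = \tfrac{1}{\mu+1}\bigl(x^{\mu+1}\sqrt{x^2-a^2} - \psit_{\mu+2,\:\!a}(x)\bigr)$, which is what your choice of parts produces, fails at $\mu=-1$. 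The paper sidesteps this by using your ``key identity'' at the outset --- writing $x^\mu\sqrt{x^2-a^2} = x^{\mu+2}/\sqrt{x^2-a^2} - a^2x^\mu/\sqrt{x^2-a^2}$ first --- and then integrating $\int x^{\mu+2}/\sqrt{x^2-a^2}\,\dd x$ by parts with the roles swapped, $u = x^{\mu+1}$ and $v' = x/\sqrt{x^2-a^2}$, so that $v = \sqrt{x^2-a^2}$ and no division by $\mu+1$ ever occurs; the resulting relation $\taut_{\mu,\:\!a}(x) = x^{\mu+1}\sqrt{x^2-a^2} - (\mu+1)\,\taut_{\mu,\:\!a}(x) - a^2\,\psit_{\mu,\:\!a}(x)$ is valid for all $\mu \ge -1$. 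Your alternative argument --- differentiating the proposed formula and invoking $\psit_{\mu,\:\!a}{}' = \psi_{\mu,\:\!a}$ --- is clean and does cover every $\mu = -1,0,1,\ldots$, so the lemma is established either way; but if you keep the integration by parts as the primary proof you must either treat $\mu=-1$ separately or exchange $u$ and $v$ as the paper does.
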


\begin{proof}
We have
\beq
  \taut_{\mu,\:\!0}(x)
= \int x^{\mu+1}\,\dd x
= \frac{x^{\mu+2}}{\mu+2}\,.  
\eeq
In the case $a > 0$ we have
\beqn \label{Eq:tauta}
  \taut_{\mu,\:\!a}(x)
= \int x^\mu\,\sqrt{x^2-a^2}\,\dd x   
= \int\frac{x^\mu\,(x^2-a^2)}{\sqrt{x^2-a^2}}\,\dd x
= \int\frac{x^{\mu+2}}{\sqrt{x^2-a^2}}\,\dd x - a^2\,\psit_{\mu,\:\!a}(x)\,. 
\eeqn
Integration by parts with
\beq
\begin{array}{c@{\:=\:}l@{\qquad}c@{\;=\;}l}
  u  & x^{\mu+1}\,,    & v' & \ds{\frac{x}{\sqrt{x^2-a^2}}}\,,\\[0.35cm]
  u' & (\mu+1)x^\mu\,, & v  & \ds{\sqrt{x^2-a^2}}	
\end{array}  
\eeq
yields
\begin{align*}
  \int\frac{x^{\mu+2}}{\sqrt{x^2-a^2}}\,\dd x
= {} & x^{\mu+1}\,\sqrt{x^2-a^2} - (\mu+1)\int x^\mu\,\sqrt{x^2-a^2}\,\dd x\db\\[0.05cm]
= {} & x^{\mu+1}\,\sqrt{x^2-a^2} - (\mu+1)\,\taut_{\mu,\:\!a}(x)\,. 
\end{align*}
Putting this into \eqref{Eq:tauta} provides
\beq
  \taut_{\mu,\:\!a}(x)
= x^{\mu+1}\,\sqrt{x^2-a^2} - (\mu+1)\,\tau_{\mu,\:\!a}(x) - a^2\,\psit_{\mu,\:\!a}(x)\,,  
\eeq
hence
\beq
  \taut_{\mu,\:\!a}(x)
= \frac{1}{\mu+2}\left(x^{\mu+1}\,\sqrt{x^2-a^2} - a^2\,\psit_{\mu,\:\!a}(x)\right). \qedhere  
\eeq 
\end{proof}

\begin{remark}
I do not use $\taut_{\mu,\:\!a}(x)$ in the closer form
\beq
  \taut_{\mu,\:\!a}(x)
= \frac{1}{\mu+1}\left(x^{\mu+1}\,\sqrt{x^2-a^2} - \int\frac{x^{\mu+2}}{\sqrt{x^2-a^2}}\,\dd x\right)
= \frac{1}{\mu+1}\left(x^{\mu+1}\,\sqrt{x^2-a^2} - \psit_{\mu+2,\:\!a}(x)\right),  
\eeq
which follows immediately by partial integration from the first integral in \eqref{Eq:tauta}, because it does not hold for the case $\mu = -1$, which is also needed in the following.  
\end{remark}
% !TeX root = MDRP_0.tex

\section{The point distance density function}
\label{Sec:PDF}

\begin{thm} \label{Thm:PDF}
With $\LP$ and $\AP$ according to \eqref{Eq:L_and_A}, $\alpha$, $K$, $\ell_k$, $\la$, $h_k$, $p_k$, $q_k$, $s_k$ as in Theorem \ref{Thm:CLD},
\beq
  \sigmat_{1,\:\!a}(x)
= \frac{1}{2}\left(x^2\arcsin\frac{a}{x} + a\,\sqrt{x^2-a^2}\right),\qquad
  \taut_{-1,\:\!a}(x)
= \sqrt{x^2-a^2} + a\arcsin\frac{a}{x}\,,  
\eeq
the density function $\fD$ of the distance $\D_{n,\:\!r}$ between two uniformly and independently distributed random points in $\Po$, $n = 3,4,\ldots$, is given by
\beq
  \fD(x)
= \left\{
  \begin{array}{l@{\quad\mbox{if}\quad}l}
	\dfrac{2x}{\AP}\left[\pi-\dfrac{\LP\,\phi^\flat(x)}{\AP\,\ell_1}\right] &
		x \in [0,\ell_{K+1})\,,\\[0.5cm]
	0 & x \in \R\setminus[0,\ell_{K+1})\,,		
  \end{array}
  \right. 
\eeq
where
\beq
  \phi^\flat(x)
= \sum_{\nu=0}^{k-1} J_\nu^\flat(\ell_{\nu+1}) + J_k^\flat(x)
  \quad\mbox{if}\quad
  \ell_k \le x < \ell_{k+1}\,,\:\: k = 0,\ldots,K 
\eeq
with
\beq
  J_k^\flat(x)
= \left\{
  \begin{array}{lcl}
	H_{0,\:\!k}^\flat(x) & \mbox{if} & k=0\,\wedge\,x<\la\,,\\[0.15cm]
	H_{1,\:\!k}^\flat(x) - H_{1,\:\!k}^\flat(\ell_k) & \mbox{if} & (\mbox{$n$ is even}\,\wedge\,1 \le k \le K-1)\:\:
	\vee\\[0.05cm]
	& & (\mbox{$n$ is odd}\,\wedge\,1 \le k \le K\,\wedge\,x < \la)\,,\\[0.15cm]
	H_{2,\:\!k}^\flat(x) - H_{2,\:\!k}^\flat(\ell_k) & \mbox{if} & \mbox{$n$ is even}\,\wedge\,k = K\,,\\[0.15cm]
	H_{3,\:\!k}^\flat(x) - H_{3,\:\!k}^\flat(\la) + H_{0,\:\!k}^\flat(\la) & \mbox{if} & 
		n=3\,\wedge\,k = 0\,\wedge\,x\ge\la\,,\\[0.15cm]
	H_{3,\:\!k}^\flat(x) - H_{3,\:\!k}^\flat(\la) + H_{1,\:\!k}^\flat(\la) - H_{1,\:\!k}^\flat(\ell_k)& \mbox{if} & 
		\mbox{$n$ is odd}\,\wedge\,n>3\,\wedge\,x\ge\la\,,	
  \end{array}
  \right.   
\eeq
and
\begin{align*}
  H_{0,\:\!k}^\flat(x)
= {} & \ell_1\,x - \left[1+\alpha\left(\tan\alpha-\cot\alpha\right)\right]x^2/4\,,\db\\[0.15cm]  
  H_{1,\:\!k}^\flat(x)
= {} & p_k\,\sigmat_{1,\:\!h_k}(x)
  - \left[k\alpha\cot(2k\alpha)-(k+1)\alpha\cot(2(k+1)\alpha)\right]x^2/2\\[0.05cm]
& - (h_k\,p_k+2rq_k\cos\alpha)\,\taut_{-1,\:\!h_k}(x)\,,\db\\[0.15cm]
  H_{2,\:\!k}^\flat(x)
= {} & \left[1/2-k\alpha\cot(2k\alpha)\right]x^2/2
  + \cot(2k\alpha)\,\sigmat_{1,\:\!h_k}(x)
  - \left(h_k-2r\cos\alpha\right)h_k\ln x\\[0.05cm]
& - \left[h_k\cot(2k\alpha) + 2r\cos\alpha\tan(k\alpha)\right]\taut_{-1,\:\!h_k}(x)\,,\db\\[0.15cm]    
  H_{3,\:\!k}^\flat(x)
= {} & p_k\,\sigmat_{1,\:\!h_k}(x) + 2\cot(2(k+1)\alpha)\,\sigmat_{1,\:\!\la}(x)
  - \left[(\pi-\alpha)\cot(2(k+1)\alpha)+s_k\right]x^2/2\\[0.05cm]
& - (h_k\,p_k+2rq_k\cos\alpha)\,\taut_{-1,\:\!h_k}(x)\\[0.05cm]
& - 2\cos\alpha\left[2r\cos(\alpha/2)\sec((k+1)\alpha)-\la\csc(2(k+1)\alpha)\right]\taut_{-1,\:\!\la}(x)\,.
\end{align*}
\end{thm}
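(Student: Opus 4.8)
The plan is to deduce $\fD$ from the chord length distribution $\FL$ of Theorem~\ref{Thm:CLD} through the integral-geometric identity that ties the point-distance density of a convex body to its chord-length distribution. Writing $\hat\theta=(\cos\theta,\sin\theta)$ and introducing the set covariance $C_\theta(x):=\mathrm{area}\big(\Po\cap(\Po-x\,\hat\theta)\big)$, the substitution $P_2-P_1=x\,\hat\theta$ (with Jacobian $x\,\dd x\,\dd\theta$) in the double integral defining the law of $\D_{n,\:\!r}$ gives the standard representation
\[
  \fD(x)=\frac{x}{\AP^2}\int_0^{2\pi}C_\theta(x)\,\dd\theta .
\]
The first step is to rewrite $C_\theta(x)$ as a Cauchy-type chord integral: slicing $\Po$ by the lines parallel to $\hat\theta$ and observing that a chord of length $\ell$ overlaps its own $x$-translate in a segment of length $(\ell-x)_+$, one finds $C_\theta(x)=\int(\ell-x)_+\,\dd p$, integrated over the perpendicular coordinate $p$ of these chords.

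Carrying out the $\theta$-integration recombines $\dd p\,\dd\theta$ into the line measure $\dd p\,\dd\ph$ of Section~\ref{Sec:CLD}. That measure has total mass $\LP$ on the lines meeting $\Po$ and pushes forward to the law of $\La_{n,\:\!r}$, so $\int_0^{2\pi}C_\theta(x)\,\dd\theta=2\,\LP\,\E\big[(\La_{n,\:\!r}-x)_+\big]$; integrating by parts, $\E\big[(\La_{n,\:\!r}-x)_+\big]=\int_x^{d}\big(1-\FL(s)\big)\,\dd s$, whence
\[
  \fD(x)=\frac{2\,\LP\,x}{\AP^2}\int_x^{d}\big(1-\FL(s)\big)\,\dd s .
\]
The factor $2$ reflects that each unoriented line is met twice as $\theta$ runs through $[0,2\pi)$; I would pin down this constant, and the total mass $\LP$, on the disc, where $1-\FL$ is explicit.

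The second step splits $\int_x^{d}=\int_0^{d}-\int_0^{x}$ and inserts the classical mean-chord identity $\int_0^{d}\big(1-\FL(s)\big)\,\dd s=\E[\La_{n,\:\!r}]=\pi\AP/\LP$ (equivalently $\int\ell\,\dd p\,\dd\ph=\pi\AP$). This produces the leading term $2\pi x/\AP$ and leaves
\[
  \fD(x)=\frac{2x}{\AP}\left[\pi-\frac{\LP}{\AP\,\ell_1}\int_0^{x}\ell_1\big(1-\FL(s)\big)\,\dd s\right].
\]
Comparison with the asserted formula reduces everything to the claim $\phi^\flat(x)=\int_0^{x}\ell_1\big(1-\FL(s)\big)\,\dd s$; that is, $\phi^\flat$ must be the antiderivative, vanishing at $0$, of the piecewise function $H_k$, since $1-\FL(s)=H_k(s)/\ell_1$ on $[\ell_k,\ell_{k+1})$ by Theorem~\ref{Thm:CLD}.

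The last step is computational. Each branch of $H_k$ is a linear combination of $\sigma_{1,\:\!a}$, $\tau_{-1,\:\!a}$, a multiple of $x$, a constant, and (for even $n$, $k=K$) a multiple of $1/x$, so its antiderivative is read off term by term from Lemmas~\ref{Lem:sigmat} and~\ref{Lem:taut} via $\sigma_{1,\:\!a}\mapsto\sigmat_{1,\:\!a}$, $\tau_{-1,\:\!a}\mapsto\taut_{-1,\:\!a}$, $x\mapsto x^2/2$ and $1/x\mapsto\ln x$; this reproduces exactly $H_{0,\:\!k}^\flat,\dots,H_{3,\:\!k}^\flat$. Writing $\phi^\flat$ as the running integral $\sum_{\nu=0}^{k-1}J_\nu^\flat(\ell_{\nu+1})+J_k^\flat(x)$ is then just additivity of the integral across the breakpoints $\ell_k$, with integration constants chosen to glue $\phi^\flat$ continuously; for odd $n$ the branch of $H_K$ switches at the interior point $\la$, which is why the last two cases of $J_k^\flat$ subtract $H_{3,\:\!k}^\flat(\la)$ and add back the value of the previous branch at $\la$. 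I expect the main obstacle to be the first step---the passage from the covariogram to the chord integral together with the correct prefactor $2\LP/\AP^2$---while the remaining effort is the careful bookkeeping of the piecewise constants; useful consistency checks are $\phi^\flat(0)=0$ and $\phi^\flat(d)=\pi\AP\,\ell_1/\LP$, the latter forcing $\fD(d^-)=0$ as it must.
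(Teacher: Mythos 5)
Your proposal is correct and, in its essential structure, coincides with the paper's proof: both reduce the claim to the identity $\fD(x)=\frac{2x}{\AP}\bigl[\pi-\frac{\LP}{\AP}\int_0^x(1-\FL(s))\,\dd s\bigr]$ and then obtain $\phi^\flat$ by integrating the piecewise expression $H_k$ from Theorem \ref{Thm:CLD} term by term via Lemmas \ref{Lem:sigmat} and \ref{Lem:taut}, with the same bookkeeping of the breakpoints $\ell_k$ and of the interior switch at $\la$ for odd $n$. The one genuine difference is at the start: the paper simply cites Piefke's relation $\fD(x)=\frac{2\LP x}{\AP^2}\int_x^d(s-x)\fL(s)\,\dd s$ and the author's earlier work for the displayed intermediate formula, whereas you derive the equivalent identity from first principles through the covariogram $C_\theta(x)$, Cauchy's formula for the measure $\LP$ of lines meeting $\Po$, and the mean-chord identity $\E[\La_{n,\:\!r}]=\pi\AP/\LP$ (which the paper itself invokes later, in the proof of Theorem \ref{Thm:Moments}). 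Your version buys self-containedness at the cost of a page of integral geometry; your consistency checks $\phi^\flat(0)=0$ and $\phi^\flat(d)=\pi\AP\,\ell_1/\LP$ (forcing $\fD(d^-)=0$) are sound and are a useful addition not made explicit in the paper.
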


\begin{proof}
From Piefke's \autocite*[p.\ 130]{Piefke} result it immediately follows that the density functions $\fL$ and $\fD$ are connected by
\beq
  \fD(x)
= \frac{2\LP\,x}{\AP^2} \int_x^d (s-x) \fL(s)\, \dd s\,.  
\eeq
In \textcite{Baesel:Random_chords} (see pp.\ 10-11) we concluded that
\beqn \label{Eq:f_Delta}
  \fD(x)
= \frac{2x}{\AP} \left[\pi - \frac{\LP\,\phi^\natural(x)}{\AP}\right]  
\eeqn
where
\beqn \label{Eq:phi^natural}
  \phi^\natural(x)
:= x - \int_0^x \FL(s)\, \dd s
= \int_0^x \left(1-\FL(s)\right) \dd s\,.
\eeqn
So for $\ell_k \le x \le \ell_{k+1}$, $k = 0,\dots,K$, we have
\beqn \label{Eq:phi^b}
  \ell_1\,\phi^\natural(x)
= \phi^\flat(x)
:= \sum_{\nu=0}^{k-1}\, J_\nu^\flat(\ell_{\nu+1}) + J_k^\flat(x)  	
\eeqn
where
\beqn \label{Eq:J_nu^b}
  J_\nu^\flat(x)
= \int_{\ell_\nu}^x H_\nu(s)\, \dd s  
\eeqn
with $H_\nu$ according to Theorem \ref{Thm:CLD} (the sum in \eqref{Eq:phi^b} is empty if $k=0$).
Considering the necessary case distinctions, the result of Theorem \ref{Thm:PDF} follows from evaluating the integral \eqref{Eq:J_nu^b} for $\nu = 0,\ldots,K$. 
Note that the condition $k=0\,\wedge\,x<\la$ is an abbreviation for the equivalent, more detailed condition
\beq
  (n=3\,\wedge\,k=0\le x<\la)\,\vee\,(n>3\,\wedge\,k=0)\,.
\eeq
The expression for $\sigmat_{1,\:\!a}(x)$ follows from Lemma \ref{Lem:sigmat} with $\psit_{1,\:\!a}(x)$ in Examples \ref{Examples:psit}, whereas the expression for $\taut_{-1,\:\!a}(x)$ follows from Lemma \ref{Lem:taut} with $\psit_{-1,\:\!a}(x)$ according to Lemma \ref{Lem:psit}.  
\end{proof}

Examples for graphs of density functions are shown in Figures \ref{Abb:Diagramm_P_(7,r)} and \ref{Abb:Diagramm_P_(8,r)}. $f_{n,\:\!r}(x) = \dd F_{n,\:\!r}(x)/\dd x$ denotes the chord length density function.

\begin{SCfigure}[0.6][h]
  \includegraphics[width=0.55\textwidth]{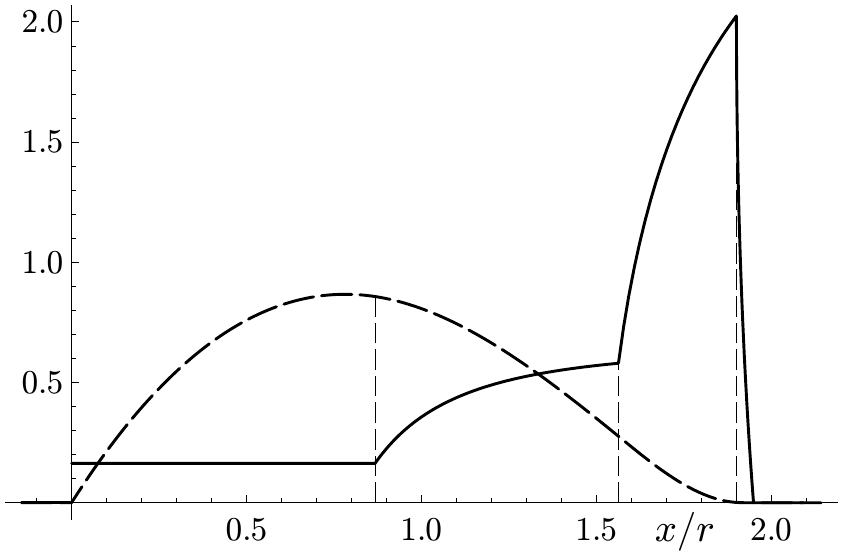}
  \caption{Graphs of density functions for $\mathcal{P}_{7,\:\!r}$: $r\times f_{7,\:\!r}(x)$, and $r\times g_{7,\:\!r}(x)$ (dashed)}
  \label{Abb:Diagramm_P_(7,r)}
\end{SCfigure}

\begin{SCfigure}[0.6][h]
  \includegraphics[width=0.55\textwidth]{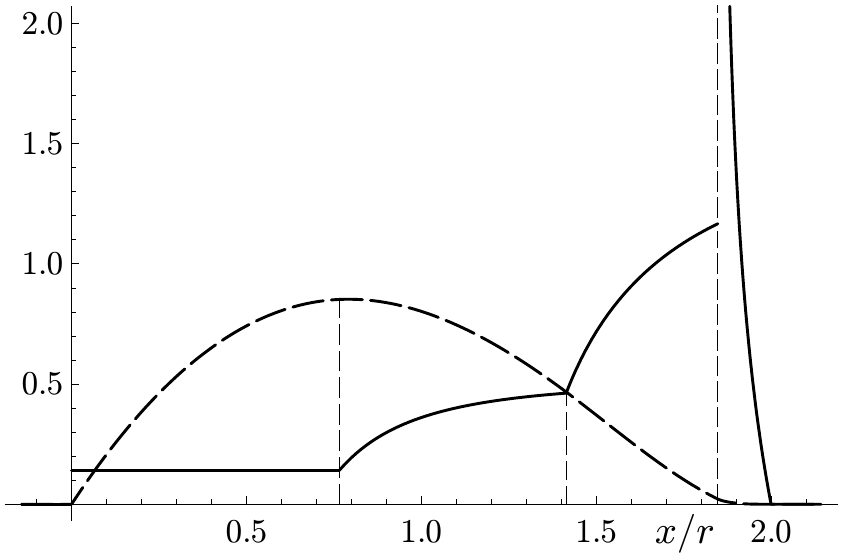}
  \caption{Graphs of density functions for $\mathcal{P}_{8,\:\!r}$: $r\times f_{8,\:\!r}(x)$, and $r\times g_{8,\:\!r}(x)$ (dashed)}
  \label{Abb:Diagramm_P_(8,r)}
\end{SCfigure}
% !TeX root = MDRP_0.tex

\newpage

\section{Moments of the distance}
\label{Sec:Moments_distance}

\begin{thm} \label{Thm:Moments}
With
\begin{itemize}[leftmargin=0.5cm]
\setlength{\itemsep}{-2pt}
\item $\LP$, $\AP$ according to \eqref{Eq:L_and_A},
\item $\alpha$, $K$, $\ell_k$, $\la$, $h_k$, $p_k$, $q_k$, $s_k$ as in Theorem \ref{Thm:CLD},
\item $\sigmat_{\mu,\:\!a}(x)$ according to Lemma \ref{Lem:sigmat},
\item $\taut_{\mu,\:\!a}(x)$ according to Lemma \ref{Lem:taut},
\end{itemize}
the moments of the distance between two uniformly and independently distributed random points in $\Po$ are given by
\beq
  M_m(\Po)
= \frac{2\LP}{(m+2)\,\AP^2\,\ell_1}\,\sum_{k=0}^K \widetilde{J}_{m,\:\!k}(\ell_{k+1})\,,\quad
  m = -1,0,1,2,\ldots  
\eeq
where
\beq
  \widetilde{J}_{m,\:\!k}(x)
= \left\{
  \begin{array}{l@{\;\;\,}c@{\;\;\,}l}
	\Ht_{m,\:\!k}^{(0)}(x) & \mbox{if} & k = 0 \,\wedge\, x < \la\,,\\[0.2cm]
	\Ht_{m,\:\!k}^{(1)}(x) - \Ht_{m,\:\!k}^{(1)}(\ell_k) &
	\mbox{if} & \mbox{$(n$ is even $\,\wedge\,$ $1 \le k \le K-1)$ $\,\vee$}\\
	&  & \mbox{$(n$ is odd $\,\wedge\,$ $1 \le k \le K$ $\,\wedge\,$ $x<\la)$}\,,\\[0.2cm]
	\Ht_{m,\:\!k}^{(2)}(x) - \Ht_{m,\:\!k}^{(2)}(\ell_k) &
	\mbox{if} & \mbox{$n$ is even $\,\wedge\,$ $k = K$}\,,\\[0.2cm]  
    \Ht_{m,\:\!k}^{(3)}(x) - \Ht_{m,\:\!k}^{(3)}(\la) + \Ht_{m,\:\!k}^{(0)}(\la) &
    \mbox{if} & \mbox{$n = 3$ $\,\wedge\,$ $k = 0$
    $\,\wedge\,$ $x \ge \la$}\,,\\[0.2cm]
    \Ht_{m,\:\!k}^{(3)}(x) - \Ht_{m,\:\!k}^{(3)}(\la) + \Ht_{m,\:\!k}^{(1)}(\la) - \Ht_{m,\:\!k}^{(1)}(\ell_k) & 
    \mbox{if} & \mbox{$n$ is odd $\,\wedge\,$ $n > 3$ $\,\wedge\,$ $x \ge \la$}
  \end{array}
  \right.
\eeq
with
\begin{align*}
  \Ht_{m,\:\!k}^{(0)}(x)
= {} & \frac{\ell_1\,x^{m+3}}{m+3}
  - \frac{\left[1+\alpha\left(\tan\alpha-\cot\alpha\right)\right]x^{m+4}}{2(m+4)}\,,\db\\[0.15cm]
  \Ht_{m,\:\!k}^{(1)}(x)
= {} & p_k\,\sigmat_{m+3,\:\!h_k}(x) - \left[k\alpha\cot(2k\alpha)
  - (k+1)\alpha\cot(2(k+1)\alpha)\right]\frac{x^{m+4}}{m+4}\\[0.05cm]
& - \left(h_k\,p_k+2rq_k\cos\alpha\right)\taut_{m+1,\:\!h_k}(x)\,,\db\\[0.15cm]
  \Ht_{m,\:\!k}^{(2)}(x)
= {} & \left(\frac{1}{2}-k\alpha\cot(2k\alpha)\right)\frac{x^{m+4}}{m+4}
  + \cot(2k\alpha)\,\sigmat_{m+3,\:\!h_k}(x) - \left(h_k-2r\cos\alpha\right)h_k\,\frac{x^{m+2}}{m+2}\\[0.05cm]
& - \left[h_k\cot(2k\alpha)+2r\cos\alpha\tan(k\alpha)\right]\taut_{m+1,\:\!h_k}(x)\,,\db\\[0.15cm]
  \Ht_{m,\:\!k}^{(3)}(x)
= {} & p_k\,\sigmat_{m+3,\:\!h_k}(x) + 2\cot(2(k+1)\alpha)\,\sigmat_{m+3,\:\!\la}(x)\\[0.05cm]
& - \left[(\pi-\alpha)\cot(2(k+1)\alpha)+s_k\right]\frac{x^{m+4}}{m+4}
  - \left(h_k\,p_k+2rq_k\cos\alpha\right)\taut_{m+1,\:\!h_k}(x)\\[0.15cm]
& - 2\cos\alpha\left[2r\cos(\alpha/2)\sec((k+1)\alpha)-\la\csc(2(k+1)\alpha)\right]\taut_{m+1,\:\!\la}(x)\,.
\end{align*}
\end{thm}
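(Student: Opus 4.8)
The plan is to compute $M_m(\Po)=\int_0^d x^m\fD(x)\,\dd x$ directly from the density of Theorem \ref{Thm:PDF}, converting the piecewise antiderivatives of $H_k$ that build up $\fD$ into antiderivatives of $x^{m+2}H_k$ by a single integration by parts, and then reading off the four blocks $\Ht_{m,\:\!k}^{(0)},\dots,\Ht_{m,\:\!k}^{(3)}$ case by case.

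First I would insert $\fD(x)=\frac{2x}{\AP}\bigl[\pi-\LP\,\phi^\flat(x)/(\AP\,\ell_1)\bigr]$ into the moment integral to get
\beq
  M_m(\Po)=\frac{2\pi}{\AP}\int_0^d x^{m+1}\,\dd x
  -\frac{2\LP}{\AP^2\,\ell_1}\int_0^d x^{m+1}\phi^\flat(x)\,\dd x\,.
\eeq
The first integral equals $2\pi\,d^{m+2}/\!\bigl((m+2)\AP\bigr)$. For the second I would integrate by parts with $v=x^{m+2}/(m+2)$ and $u=\phi^\flat(x)$; since $\phi^\flat(0)=0$ and $\tfrac{\dd}{\dd x}\phi^\flat(x)=H_k(x)$ on each interval $[\ell_k,\ell_{k+1}]$ by \eqref{Eq:phi^b}--\eqref{Eq:J_nu^b}, this gives
\beq
  \int_0^d x^{m+1}\phi^\flat(x)\,\dd x
  =\frac{d^{m+2}}{m+2}\,\phi^\flat(d)
  -\frac{1}{m+2}\sum_{k=0}^K\int_{\ell_k}^{\ell_{k+1}}x^{m+2}H_k(x)\,\dd x\,.
\eeq

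Next I would evaluate each $\int_{\ell_k}^{\ell_{k+1}}x^{m+2}H_k(x)\,\dd x$. Multiplying $H_k$ by $x^{m+2}$ sends its constituents $\sigma_{1,\:\!a}(x)=x\arcsin(a/x)$, the linear term, $\tau_{-1,\:\!a}(x)=x^{-1}\sqrt{x^2-a^2}$, and the term $h_k/x$ to $\sigma_{m+3,\:\!a}$, a multiple of $x^{m+3}$, $\tau_{m+1,\:\!a}$, and a multiple of $x^{m+1}$, respectively; their antiderivatives are exactly $\sigmat_{m+3,\:\!a}$, $x^{m+4}/(m+4)$, $\taut_{m+1,\:\!a}$, and $x^{m+2}/(m+2)$, which are the pieces assembling $\Ht_{m,\:\!k}^{(0)},\dots,\Ht_{m,\:\!k}^{(3)}$. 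Running through the same case split as in Theorem \ref{Thm:CLD} --- even versus odd $n$, interior $k$ versus $k=K$, and splitting the range at $\la$ in the odd-$n$, $k=K$ and the $n=3$, $k=0$ cases (which produces the $\Ht^{(3)}(x)-\Ht^{(3)}(\la)+\dots$ forms) --- identifies each integral with $\widetilde J_{m,\:\!k}(\ell_{k+1})$, so that $\sum_{k=0}^K\int_{\ell_k}^{\ell_{k+1}}x^{m+2}H_k\,\dd x=\sum_{k=0}^K\widetilde J_{m,\:\!k}(\ell_{k+1})$.

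Finally I would collect terms. The two boundary contributions
\beq
  \frac{2\pi\,d^{m+2}}{(m+2)\AP}-\frac{2\LP\,d^{m+2}\,\phi^\flat(d)}{(m+2)\AP^2\,\ell_1}
\eeq
cancel precisely because $\pi=\LP\,\phi^\flat(d)/(\AP\,\ell_1)=\LP\,\phi^\natural(d)/\AP$; equivalently this is $\fD(d)=0$ (the density vanishes at the diameter), or the classical mean-chord-length identity $\E[\La_{n,r}]=\pi\AP/\LP$, since $\phi^\natural(d)=\int_0^d(1-\FL(s))\,\dd s=\E[\La_{n,r}]$. What remains is exactly the asserted formula. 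I expect the main obstacle to be the bookkeeping in the third step: verifying that multiplication by $x^{m+2}$ followed by integration reproduces each $\Ht$-block across all of the (even/odd, interior/boundary, $x<\la$ versus $x\ge\la$) cases, and in particular that the $-(h_k-2r\cos\alpha)h_k/x$ term of the even $k=K$ case integrates to the $x^{m+2}/(m+2)$ term rather than a logarithm --- the $\ln x$ appearing in the density being the exceptional $m=-2$ value, which is harmless here since $m\ge-1$ keeps all of $m+2,m+3,m+4$ positive.
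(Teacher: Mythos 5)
Your proposal is correct and follows essentially the same route as the paper: one integration by parts to pass from $\phi^\flat$ (equivalently $\phi^\natural$) to the piecewise integrands $x^{m+2}H_k(x)$, cancellation of the boundary term via $\E[\La_{n,\:\!r}]=\pi\AP/\LP$, and term-by-term identification of the antiderivatives $\sigmat_{m+3,\:\!a}$, $\taut_{m+1,\:\!a}$, $x^{m+4}/(m+4)$, $x^{m+2}/(m+2)$ in the four $\Ht$-blocks. Your observations that the cancellation is just $\fD(d)=0$ and that the $\ln x$ of the density is the exceptional $m=-2$ case are accurate side remarks, not deviations.
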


\begin{proof}
From
\beq
  M_m(\Po) = \E[\D_{n,\:\!r}^m]
= \int_0^d x^m\,\fD(x)\: \dd x  
\eeq
with \eqref{Eq:f_Delta} and \eqref{Eq:phi^natural} we get
\begin{align*}
  M_m(\Po)
= {} & \frac{2\pi}{\AP}\int_0^d x^{m+1}\,\dd x
  - \frac{2\LP}{\AP^2}\int_{x=0}^d x^{m+1}\int_{s=0}^x(1-\FL(s))\,\dd s\,\dd x\db\\[0.05cm] 
= {} & \frac{2\pi d^{m+2}}{(m+2)\AP}
  - \frac{2\LP}{\AP^2}\int_{x=0}^d x^{m+1}\int_{s=0}^x(1-\FL(s))\,\dd s\,\dd x\,.  
\end{align*}
Integration by parts with
\beq
\begin{array}{c@{\:=\:}l@{\qquad}c@{\:=\:}l}
  u  & \int_0^x(1-F(s))\,\dd s\,, & v' & x^{m+1}\,,\\[0.25cm]
  u' & 1 - F(x)\,,				  & v  & x^{m+2}/(m+2)                 
\end{array} 
\eeq
yields
\begin{align*}
  M_m(\Po)
= {} & \frac{2\pi d^{m+2}}{(m+2)\AP}
  - \frac{2\LP}{\AP^2} \bigg[
  \left(\frac{x^{m+2}}{m+2}\,\int_0^x(1-\FL(s))\,\dd s\right)\bigg|_{x=0}^d\\[0.05cm]
& - \frac{1}{m+2}\int_0^d x^{m+2}\,(1-\FL(x))\,\dd x\bigg]\db\\[0.05cm]  
= {} & \frac{2\pi d^{m+2}}{(m+2)\AP}
  - \frac{2\LP\,d^{m+2}}{(m+2)\AP^2}\int_0^d(1-\FL(s))\,\dd s\\[0.05cm]
& + \frac{2\LP}{(m+2)\AP^2}\int_0^d x^{m+2}\,(1-\FL(x))\,\dd x\,.
\end{align*}
Now, integration by parts with
\beq
\begin{array}{c@{\:=\:}l@{\qquad}c@{\:=\:}l}
  u  & 1 - \FL(s)\,, & v' & 1\,,\\[0.25cm]
  u' & -\fL(s)\,,	 & v  & s                 
\end{array}
\eeq
gives
\beq
  \int_0^d (1-\FL(s))\,\dd s
= \underbrace{s\,(1-\FL(s))\,\Big|_0^d}_{\ds 0} \;\,+\, 
  \underbrace{\int_0^d s\,\fL(s)\,\dd s}_{\ds\E[\La_{n,\:\!r}]}\,. 
\eeq
With 
\beq
  \E[\La_{n,\:\!r}]
= \frac{\pi\AP}{\LP}\qquad
  \mbox{\autocite[p.\ 30]{Santalo}}  
\eeq
we have found
\beqn \label{Eq:M_m}
  M_m(\Po)
= \frac{2\LP}{(m+2)\AP^2}\int_0^d x^{m+2}\,(1-\FL(x))\,\dd x\,.
\eeqn
It follows that
\begin{align*}
  M_m(\Po)
= {} & \frac{2\LP}{(m+2)\AP^2}\,\sum_{k=0}^K\,\int_{\ell_k}^{\ell_{k+1}} x^{m+2}\,(1-\FL(x))\,\dd x\db\\[0.05cm]
= {} & \frac{2\LP}{(m+2)\AP^2\,\ell_1}\,\sum_{k=0}^K\,
  \underbrace{\int_{\ell_k}^{\ell_{k+1}} x^{m+2}\,H_k(x)\, \dd x}_{\ds =:\widetilde{J}_{m,\:\!k}(\ell_{k+1})}.
\end{align*}
with $H_k(x)$, $k \in \{0,1,\ldots,N\}$, from Theorem \ref{Thm:CLD}.
So it remains to determine the antiderivatives
\beq
  \Ht_{m,\:\!k}^{(q)}(x)
:= \int x^{m+2}\,H_k(x)\,\dd x\,,  
\eeq
where the $q$ is used to indicate the necessary case distinctions.

The antiderivatives $\sigmat_{m+3,\:\!a}(x)$ and $\taut_{m+1,\:\!a}(x)$ are used because of
\beq
  \int x^{m+2}\,\sigma_{1,\:\!a}(x)\,\dd x
= \int x^{m+2}\,x\arcsin\frac{a}{x}\,\dd x   
= \sigmat_{m+3,\:\!a}(x)
\eeq
and
\beq
  \int x^{m+2}\,\tau_{-1,\:\!a}(x)\,\dd x
= \int x^{m+2}\,\frac{\sqrt{x^2-a^2}}{x}\,\dd x   
= \taut_{m+1,\:\!a}(x)\,,
\eeq
respectively.
\end{proof}

\begin{remark}
Note that, although the expressions are in general quite long, Theorem \ref{Thm:Moments} always provides closed form expressions for the moments of every $\Po$.
\end{remark}

\begin{remark}
The relation \eqref{Eq:M_m} also follows from
\beqn \label{Eq:S_m-T_m-a}
  T_m = \frac{2}{(m+2)(m+3)}\, S_{m+3}\,,\quad
  m = -1,0,1,2,\,\ldots,
\eeqn
where $S_m$ and $T_m$ are the $m$-th chord power integral and the $m$-th distance power integral, respectively (\textcite[pp.\ 19-20]{Blaschke:Integralgeometrie}, \textcite[pp.\ 46-47]{Santalo}, see also \textcite[p.\ 364, proof of Thm.\ 8.6.6{;} p.\ 374]{Schneider&Weil2008}).
With
\beqn \label{Eq:S_m-T_m-b}
  \E[\La_{n,\:\!r}^m] = \frac{S_m(\Po)}{S_0(\Po)} = \frac{S_m(\Po)}{\LP}
  \quad\mbox{and}\quad
  M_m(\Po) = \E[\D_{n,\:\!r}^m] = \frac{T_m(\Po)}{T_0(\Po)} = \frac{T_m(\Po)}{\AP^2}
\eeqn
one gets
\begin{align*}
  M_m(\Po)
= {} & \frac{2}{(m+2)(m+3)}\,\frac{\LP}{\AP^2}\int_0^d x^{m+3}\, \dd\FL(x)\nonumber\db\\[0.05cm]
= {} & \frac{2\LP}{(m+2)\AP^2}\int_0^d x^{m+2}\,(1-\FL(x))\,\dd x  
\end{align*}
(see also \textcite[Eq.\ (2.3)]{Aharonyan&Ohanyan}).

\textcite{Heinrich2009} derived a general formula for the second-order chord power integrals, $S_2$, for $\Po$.
This gives $T_{-1}$ and hence $\E[\Delta_{n,\:\!r}^{-1}]$ (see also Section \ref{Sec:CPI}).
\end{remark}

%%%%%%%%%%%%%%%%%%%%%%%%%%%%%%%%%%%%%
%%%%%%%%%% Mean distances %%%%%%%%%%%
%%%%%%%%%%%%%%%%%%%%%%%%%%%%%%%%%%%%% 

\begin{cor} \label{Cor:MD}
The mean distance $M_1(\Po)$ between two uniformly and independently distributed random points in $\Po$ is given by Theorem \ref{Thm:Moments} with $m=1$.
If $a > 0$, then
\begin{align*}
  \sigmat_{4,\:\!a}(x)
= {} & \frac{1}{40}\left[8\,x^5\arcsin\frac{a}{x}
  + a\,x\left(2x^2+3a^2\right)\sqrt{x^2-a^2}
  + 3\,a^5\ln\frac{x+\sqrt{x^2-a^2}}{a}\right],\db\\[0.15cm]
  \taut_{2,\:\!a}(x)
= {} & \frac{1}{8}\left[x\left(2x^2-a^2\right)\sqrt{x^2-a^2}
  - a^4\ln\frac{x+\sqrt{x^2-a^2}}{a}\right].
\end{align*}
\end{cor}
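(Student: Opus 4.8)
The plan is to treat the corollary as two separate assertions. The first—that $M_1(\Po)$ is obtained from Theorem \ref{Thm:Moments} by setting $m=1$—requires no work beyond specialization: $M_1(\Po)=\E[\D_{n,\:\!r}]$ is by definition the first moment, and Theorem \ref{Thm:Moments} already furnishes a closed form for $M_m(\Po)$ for every integer $m\ge -1$. Inspecting the building blocks $\Ht^{(0)}_{m,\:\!k},\dots,\Ht^{(3)}_{m,\:\!k}$ with $m=1$, one sees that the only antiderivatives entering that are not already displayed elsewhere are $\sigmat_{m+3,\:\!a}=\sigmat_{4,\:\!a}$ and $\taut_{m+1,\:\!a}=\taut_{2,\:\!a}$. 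Recording explicit expressions for these two is therefore the entire substance of the corollary.

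For the two explicit formulas I would simply substitute the relevant index into Lemmas \ref{Lem:sigmat} and \ref{Lem:taut} and then insert the corresponding value of $\psit$ from Examples \ref{Examples:psit}. Taking $\mu=4$ in Lemma \ref{Lem:sigmat} gives $\sigmat_{4,\:\!a}(x)=\tfrac{1}{5}\bigl(x^5\arcsin(a/x)+a\,\psit_{4,\:\!a}(x)\bigr)$; inserting the listed value of $\psit_{4,\:\!a}(x)$ and pulling out the common factor $\tfrac{1}{40}$ produces the stated expression. Taking $\mu=2$ in Lemma \ref{Lem:taut} gives $\taut_{2,\:\!a}(x)=\tfrac{1}{4}\bigl(x^3\sqrt{x^2-a^2}-a^2\,\psit_{2,\:\!a}(x)\bigr)$; substituting the listed value of $\psit_{2,\:\!a}(x)$ and factoring $\tfrac{1}{8}$ yields the claimed formula.

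There is no genuine obstacle here—both identities are routine substitutions from the earlier lemmas. The only point demanding a little care is the bookkeeping of the rational prefactors: one must correctly merge the outer $\tfrac{1}{5}$ with the $\tfrac{1}{8}$ inside $\psit_{4,\:\!a}$ (recognizing $\tfrac{1}{5}=\tfrac{8}{40}$ to clear to a single $\tfrac{1}{40}$), and the outer $\tfrac{1}{4}$ with the $\tfrac{1}{2}$ inside $\psit_{2,\:\!a}$ to reach $\tfrac{1}{8}$. In the second case one additionally checks that the two radical terms combine, namely that $2x^3\sqrt{x^2-a^2}-a^2x\sqrt{x^2-a^2}$ collapses to $x(2x^2-a^2)\sqrt{x^2-a^2}$, after which the logarithmic term is already in final form.
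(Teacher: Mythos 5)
Your proposal is correct and matches what the paper does (the corollary is stated without a separate proof precisely because it is the routine specialization you describe): setting $\mu=4$ in Lemma \ref{Lem:sigmat} and $\mu=2$ in Lemma \ref{Lem:taut}, inserting $\psit_{4,\:\!a}$ and $\psit_{2,\:\!a}$ from Examples \ref{Examples:psit}, and clearing the prefactors to $\tfrac{1}{40}$ and $\tfrac{1}{8}$ reproduces both displayed formulas, and these are indeed the only antiderivatives needed in Theorem \ref{Thm:Moments} for $m=1$ beyond those already listed. No gaps.
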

% !TeX root = MDRP_0.tex

\section{Examples of closed form expressions for the moments}
\label{Sec:Closed_form_expressions}

For abbreviation we denote the moments in the following with $M_m$ instead of $M_m(\Po)$.
From the context it will be clear for which polygon $\Po$ the moments $M_m$ are valid.

%33333333333333333333333333333333
\textbullet\, 3-gon (equilateral triangle)
\begin{gather*}
  \frac{M_{-1}}{r^{-1}}
= \frac{4\sqrt{3}\ln 3}{3}\,,\quad  
  \frac{M_0}{r^0}
= 1\,,\quad
  \frac{M_1}{r}
= \frac{\sqrt{3}}{5} + \frac{3\sqrt{3}\ln 3}{20}\,,\quad  
  \frac{M_2}{r^2}
= \frac{1}{2}\,,\db\\[0.15cm]
  \frac{M_3}{r^3}
= \frac{51\sqrt{3}}{280} + \frac{81\sqrt{3}\ln 3}{1120}\,,\quad
  \frac{M_4}{r^4}
= \frac{9}{20}\,,\quad
  \frac{M_5}{r^5}
= \frac{383\sqrt{3}}{1792} + \frac{405\sqrt{3}\ln 3}{7168}\,,\quad
  \frac{M_6}{r^6}
= \frac{747}{1400}\,,\db\\[0.15cm]
  \frac{M_7}{r^7}
= \frac{6669\sqrt{3}}{22528} + \frac{5103\sqrt{3}\ln 3}{90112}\,,\quad
  \frac{M_8}{r^8}
= \frac{261}{350}\,,\quad\db\\[0.15cm]
  \frac{M_9}{r^9}
= \frac{977913\sqrt{3}}{2129920} + \frac{1240029\sqrt{3}\ln 3}{18743296}\,,\quad
  \frac{M_{10}}{r^{10}}
= \frac{2511}{2156}
\end{gather*}
With side length $\ell_1 = \sqrt{3}\,r$ we have
\beq
  \frac{M_1}{\ell_1}
= \frac{1}{5} + \frac{3\ln 3}{20}
= \frac{3}{5} \left(\frac{1}{3} + \frac{1}{4}\,\ln 3\right)
\approx 0.36479184330021645371\,.  
\eeq
This result is due to \textcite[p.\ 206]{Czuber1884} (see also \textcite[p.\ 49]{Santalo}, \textcite{Weisstein:Triangle_Line_Picking}, \textcite{OEIS:A093064}, \textcite{Zhuang&Pan12}, and \textcite{Aharonyan&Ohanyan}).\\[0.2cm]
%44444444444444444444444444444444
\textbullet\, 4-gon (square)
\begin{gather*}
  \frac{M_{-1}}{r^{-1}}
= -\frac{4}{3} + \frac{2\sqrt{2}}{3} + 2\sqrt{2}\ln\left(1+\sqrt{2}\right)\,,\quad
  \frac{M_0}{r^0}
= 1\,,\quad  
  \frac{M_1}{r}
= \frac{2}{15} + \frac{2\sqrt{2}}{15} + \frac{\sqrt{2}\ln\left(1+\sqrt{2}\right)}{3}\,,\db\\[0.05cm]
  \frac{M_2}{r^2}
= \frac{2}{3}\,,\quad
  \frac{M_3}{r^3}
= \frac{34}{105} + \frac{8\sqrt{2}}{105} + \frac{\sqrt{2}\ln\left(1+\sqrt{2}\right)}{5}\,,\quad
  \frac{M_4}{r^4}
= \frac{34}{45}\,,\\[0.15cm]
  \frac{M_5}{r^5}
= \frac{73}{126} + \frac{4\sqrt{2}}{63} + \frac{5\sqrt{2}\ln\left(1+\sqrt{2}\right)}{28}\,,\quad
  \frac{M_6}{r^6}
= \frac{116}{105}\,,\\[0.15cm]
  \frac{M_7}{r^7}
= \frac{3239}{2970} + \frac{32\sqrt{2}}{495} + \frac{7\sqrt{2}\ln\left(1+\sqrt{2}\right)}{36}\,,\quad
  \frac{M_8}{r^8}
= \frac{2992}{1575}\,,\\[0.15cm]
  \frac{M_9}{r^9}
= \frac{1721}{780} + \frac{32\sqrt{2}}{429} + \frac{21\sqrt{2}\ln\left(1+\sqrt{2}\right)}{88}\,,\quad
  \frac{M_{10}}{r^{10}}
= \frac{7648}{2079}
\end{gather*}
With side length $\ell_1 = \sqrt{2}\,r$ we have
\beq
  \frac{M_1}{\ell_1}
= \frac{1}{15}\left[\sqrt{2} + 2 + 5\,\ln\left(1+\sqrt{2}\right)\right]
\approx 0.52140543316472067833\,.\\[0.2cm]
\eeq
This result is due to \textcite[pp.\ 202-204, Problem XV]{Czuber1884}.
It can also be found in \textcite{Ghosh}, \textcite[p.\ 49]{Santalo}, \textcite[p.\ 479]{Finch2003}, \textcite{Weisstein:Square_Line_Picking}, \textcite{OEIS:A091505}, and \textcite{Aharonyan&Ohanyan}.\\[0.2cm]  
%55555555555555555555555555555555 
\textbullet\, 5-gon (pentagon)
\begin{gather*}
  \frac{M_1}{r}
= \frac{\sqrt{2}}{480}\,\sqrt{5+\sqrt{5}} \left[24\sqrt{5} - 8 - \left(35+9\sqrt{5}\right)\ln 5
  - 2\left(25+11\sqrt{5}\right)\ln\left(\sqrt{5}-2\right)\right],\db\\[0.15cm]
  \frac{M_2}{r^2}
= \frac{7}{12} + \frac{\sqrt{5}}{12}\,,\quad
  \frac{M_4}{r^4}
= \frac{47}{72} + \frac{11\sqrt{5}}{72}\,,\quad
  \frac{M_6}{r^6}
= \frac{167}{168} + \frac{2\sqrt{5}}{7}\,,\db\\[0.15cm]
  \frac{M_8}{r^8}
= \frac{65}{36} + \frac{145\sqrt{5}}{252}\,,\quad
  \frac{M_{10}}{r^{10}}
= \frac{427375}{116424} + \frac{625\sqrt{5}}{504}
\end{gather*}
%66666666666666666666666666666666
\textbullet\, 6-gon (hexagon)
\begin{gather*}
  \frac{M_{-1}}{r^{-1}}
= -\frac{16}{9} + \frac{8}{3\sqrt{3}} + \frac{22\ln 3}{9}- \frac{4\ln\left(2+\sqrt{3}\right)}{9}\,,\quad
  \frac{M_0}{r^0} = 1\,,\db\\[0.05cm]  
  \frac{M_1}{r}
= -\frac{7}{90} + \frac{7}{10\sqrt{3}} + \frac{19\ln 3}{40} - \frac{\ln\left(2+\sqrt{3}\right)}{60}\,,\quad 
  \frac{M_2}{r^2}
= \frac{5}{6}\,,\db\\[0.15cm]
  \frac{M_3}{r^3}
= \frac{817}{5040} + \frac{117\sqrt{3}}{560} + \frac{867\ln 3}{2240}
  - \frac{3\ln\left(2+\sqrt{3}\right)}{1120}\,,\quad
  \frac{M_4}{r^4}
= \frac{209}{180}\,,\db\\[0.15cm]
 \frac{M_5}{r^5}
= \frac{146431}{290304} + \frac{963\sqrt{3}}{3584} + \frac{7045\ln 3}{14336}
  - \frac{5\ln\left(2+\sqrt{3}\right)}{7168}\,,\quad
  \frac{M_6}{r^6}
= \frac{573}{280}\,,\db\\[0.15cm]
 \frac{M_7}{r^7}
= \frac{7886969}{6082560} + \frac{8541\sqrt{3}}{20480} + \frac{139797\ln 3}{180224}
  - \frac{21\ln\left(2+\sqrt{3}\right)}{90112}\,,\quad
  \frac{M_8}{r^8}
= \frac{13037}{3150}\,,\db\\[0.15cm]
  \frac{M_9}{r^9}
= \frac{1395486403}{421724160} + \frac{34152111\sqrt{3}}{46858240} + \frac{52256421\ln 3}{37486592}
  - \frac{1701\ln\left(2+\sqrt{3}\right)}{18743296}\,,\quad
  \frac{M_{10}}{r^{10}}
= \frac{76273}{8316}
\end{gather*}
The mean distance was obtained by \textcite{Zhuang&Pan11} in the form
\beq
  \frac{M_1}{\ell_1}
= \frac{7\sqrt{3}}{30} - \frac{7}{90} + \frac{1}{60}\left[28\ln\left(2\sqrt{3}+3\right) 
  + 29\ln\left(2\sqrt{3}-3\right)\right]   
\eeq
with side length $\ell_1 = r$. (Only the numerical approximation there is not correct.)
\textcite{Zhuang&Pan11} also derived the second moment.\\[0.2cm]
%88888888888888888888888888888888
\textbullet\, 8-gon (octagon)
\begin{gather*}
\begin{aligned}
  \frac{M_1}{r}
= {} & \frac{4\sqrt{2}\,\pi}{3} + 2\sqrt{2-\sqrt{2}}\left[\frac{1}{5}
  + \frac{13\sqrt{2}}{120} - \left(\frac{1}{20}+\frac{\sqrt{2}}{60}+\frac{2\pi}{3}\right)\sqrt{2+\sqrt{2}}
  \right.\\[0.05cm]
& -\frac{1}{48}\left(14+\frac{91\sqrt{2}}{10}\right)\ln\left(2+\sqrt{2}\right)
  + \frac{1}{48}\left(1+\frac{7\sqrt{2}}{10}\right)\ln\left(2-\sqrt{2}\right)\\[0.05cm]
& \left.+\, \frac{1}{20}\left(\frac{1}{3}-\frac{\sqrt{2}}{4}\right)\ln\left(2+\sqrt{2+\sqrt{2}}\,\right)
  + \frac{1}{40}\left(21+\frac{29\sqrt{2}}{2}\right)\ln\left(2+\sqrt{2-\sqrt{2}}\,\right)\right],
\end{aligned}\db\\[0.15cm]
  \frac{M_2}{r^2}
= \frac{2}{3} + \frac{\sqrt{2}}{6}\,,\quad  
  \frac{M_4}{r^4}
= \frac{77}{90} + \frac{16\sqrt{2}}{45}\,,\quad
 \frac{M_6}{r^6}
= \frac{157}{105} + \frac{23\sqrt{2}}{30}\,,\db\\[0.15cm]
  \frac{M_8}{r^8}
= \frac{326}{105} + \frac{928\sqrt{2}}{525}\,,\quad  
  \frac{M_{10}}{r^{10}}
= \frac{2132}{297} + \frac{3002\sqrt{2}}{693}
\end{gather*}
%10-10-10-10-10-10-10-10-10-10-10
\textbullet\, 10-gon (decagon)
\begin{gather*}
\begin{aligned}
  \frac{M_1}{r}
= {} & \frac{1}{600} \left[4\left(\sqrt{5125+2110\sqrt{5}}-24-11\sqrt{5}\right)
  - \left(505+239\sqrt{5}\right)\ln 2\right.\\[0.05cm]
& - 30\left(4+3\sqrt{5}\right)\ln 5
  - \left(205+107\sqrt{5}\right)\ln\left(1+\sqrt{5}\right)\\[0.05cm] 
& + \left(705+343\sqrt{5}\right)\ln\left(3+\sqrt{5}\right)
  - \left(105+47\sqrt{5}\right)\ln\left(\sqrt{5}+\sqrt{10+2\sqrt{5}}\,\right)\\[0.05cm]
& - \left(65-29\sqrt{5}\right)\ln\left(\sqrt{5}+\sqrt{10-2\sqrt{5}}\,\right)\\[0.05cm]
&  \left.+ \left(5+3\sqrt{5}\right)\ln\left(5-\sqrt{5}+2\sqrt{10-2\sqrt{5}}\,\right)
  \right],   
\end{aligned}\db\\[0.15cm]
  \frac{M_2}{r^2}
= \frac{3}{4} + \frac{\sqrt{5}}{12}\,,\quad  
  \frac{M_4}{r^4}
= \frac{121}{120} + \frac{73\sqrt{5}}{360}\,,\quad
  \frac{M_6}{r^6}
= \frac{1513}{840} + \frac{101\sqrt{5}}{210}\,,\db\\[0.05cm]
  \frac{M_8}{r^8}
= \frac{23983}{6300} + \frac{1073\sqrt{5}}{900}\,,\quad
  \frac{M_{10}}{r^{10}}
= \frac{149279}{16632} + \frac{223\sqrt{5}}{72}
\end{gather*} 
%12-12-12-12-12-12-12-12-12-12-10
\textbullet\, 12-gon (dodecagon)
\begin{gather*}
\begin{aligned}
  \frac{M_1}{r}
= {} & \frac{1}{1080\sqrt{2}} \left[504 - 660\sqrt{2} + 396\sqrt{3} - 4\sqrt{6}
  + 2\left(3+\sqrt{3}\right)\sqrt{2+\sqrt{3}}\right.\\[0.05cm]
& + 6\left(1+47\sqrt{3}\right)\sqrt{2-\sqrt{3}}
  - \frac{27}{2}\left(15+11\sqrt{3}\right)\ln 3
  - 4\left(27-\sqrt{3}\right)\ln\left(1+\sqrt{2}\right)\\[0.05cm]
& - 4\sqrt{3}\ln\left(2+\sqrt{3}\right) + 2\left(666+397\sqrt{3}\right)\ln\left(2-\sqrt{3}\right)\\[0.05cm]
& \left.- \left(33-19\sqrt{3}\right)\ln\left(2+\sqrt{2+\sqrt{3}}\,\right)
  + 3\left(899+523\sqrt{3}\right)\ln\left(2+\sqrt{2-\sqrt{3}}\,\right)
  \right],
\end{aligned}\db\\[0.15cm]
  \frac{M_2}{r^2}
= \frac{2}{3} + \frac{1}{2\sqrt{3}}\,,\quad
  \frac{M_4}{r^4}
= \frac{163}{180} + \frac{16}{15\sqrt{3}}\,,\quad
  \frac{M_6}{r^6}
= \frac{71}{42} + \frac{661}{280\sqrt{3}}\,,\db\\[0.05cm]
  \frac{M_8}{r^8}
= \frac{2357}{630} + \frac{424}{75\sqrt{3}}\,,\quad
  \frac{M_{10}}{r^{10}}
= \frac{19099}{2079} + \frac{521}{36\sqrt{3}}
\end{gather*}
% !TeX root = MDRP_0.tex

\section{Numerical values}
\label{Sec:Numerical_values}

Numerical values for mean distances are given in Table \ref{Ta:Moments}; as an example, numerical values of moments for $\mathcal{P}_{5,\:\!r}$ may be found in Table \ref{Ta:Moments-P_5}.
Using {\em Mathematica}, all of these values are obtained in two different ways:
\begin{itemize}[leftmargin=0.6cm] \mynobreakpar
\setlength{\itemsep}{-2pt}
\item[1)] from the analytical solution in Theorem \ref{Thm:Moments} together with Corollary \ref{Cor:MD},
\item[2)] with numerical integration of
\beq
\mbox{$
  M_m(\Po)
= \int_0^d x^m \fD(x)\,\dd x\,,\quad m = -1,0,1,2,\ldots, 
$} 
\eeq
with the density function $\fD$ from Theorem \ref{Thm:PDF}.
\end{itemize}
The integration with \texttt{NIntegrate} required the use of \texttt{WorkingPrecision} and \texttt{PrecisionGoal} with suitable parameters and the splitting of the interval $[0,d]$ into subintervals corresponding to the different cases. 

Fig.\ \ref{Fig:Diagram_Expected_distance} shows how the expected distances converge against the expected distance for the circle $\Cr$ with radius $r$ (see \eqref{Eq:M_1(C_r)-M_2(C_r)}, \eqref{Eq:M_1(C_r)}). 

\begin{table}[h]
\caption{Numerical values for the mean distance}
\label{Ta:Moments}
\centering
\begin{tabular}{|c|c|} \hline
\rule{0pt}{12pt}
$n$ & $M_1(\mathcal{P}_{n,\;\!r})/r$\\[2pt] \hline\hline
\phantom{1}3 & 0.631838006782679248439363765946266548228352566630216491351433566313728372374\\
\phantom{1}4 & 0.737378635076566348769571883395005078909753124947971245022700482462877666826\\
\phantom{1}5 & 0.793698195033753381760971632749605393899757772205869281310580089360628333136\\
\phantom{1}6 & 0.826258949490232082314283750323326010149318430219325083679749121623475379023\\
\phantom{1}7 & 0.846561326216093164027700615562504932765674717512492654162687095709688638441\\
\phantom{1}8 & 0.860007978015497247289475476698227488793382581154266797596077525063820243821\\
\phantom{1}9 & 0.869349677996368661272424554005308059692018424046355884971707394951786624055\\
10 & 0.876093016045821455437851478188806272714728045761204545124286721555684616290\\
11 & 0.881115231029789401140263676940968133233965295741220353166084636303706353850\\
12 & 0.884953782114214064113523711258487868874291600958495392058472857817523925124\\
13 & 0.887952286709934145445459685691276130847157335822373829281308109648830663248\\
14 & 0.890338490756764575735944950351064496029659712504470764018294183941844198540\\
15 & 0.892268061893522241535511225799568091834470923360328845899768230281232859177\\
16 & 0.893850267566140981445305963668775169957199844033103927893285119318229687792\\
17 & 0.895163602871345666236825912146252831958826216118785135695365798856133878291\\
18 & 0.896265616099618345601553161509629568460084862108878882774989021025191298962\\
19 & 0.897199265437573395493137709163413742006864641423080106927798068769440815162\\
20 & 0.897997136941287061624499349289436760861985366382141463318416018681802183354\\
21 & 0.898684307244840900050480580043224097039202683773713372510772133748302247016\\
22 & 0.899280326175269024941354083121964386591039853255838396881868160285060306349\\
23 & 0.899800615121290023756007063614270208943463748969153209258516013892113626863\\
24 & 0.900257469758660200325240434610054253924497221077824646172921619542988501865\\
25 & 0.900660789967557175495408085191424058901073528518182128993867898837236409609\\
26 & 0.901018618516579218784735836535111473157040513720977689621453066648252571482\\
27 & 0.901337543658947219455759999490321561595657334895730647458502404432253617671\\
28 & 0.901623003533134180530119238163701029410880699933247882599362042456633796468\\
29 & 0.901879518798576670046389739873503980147299817776988523533576670581933050086\\
30 & 0.902110872199677767358604753115452862213333195981189704602574660701374264234\\
$\infty$ & 0.905414787367226799040760964963637259573814873545707797320063112868390670986\\ \hline
\end{tabular}
\end{table}

\begin{table}[h]
\caption{Numerical moments for $\mathcal{P}_{5,\:\!r}$}
\label{Ta:Moments-P_5}
\centering
\begin{tabular}{|c|l|} \hline
\rule{0pt}{12pt}
$m$ & \multicolumn{1}{c|}{$M_m(\mathcal{P}_{5,\;\!r})/r^m$}\\[2pt] \hline\hline
$-1$ & 1.941532747349740237286829163767369291397446081205725687375578118779045940892\\
\phantom{1}0 & 1.000000000000000000000000000000000000000000000000000000000000000000000000000\\
\phantom{1}1 & 0.793698195033753381760971632749605393899757772205869281310580089360628333136\\
\phantom{1}2 & 0.769672331458315808034097805727606352953384863300960477022574770450876743803\\
\phantom{1}3 & 0.840599732769508183798403754882490669375942298135780251528211484773754115102\\
\phantom{1}4 & 0.994399274340245648062512643833944980414538916051760874541387079159940696972\\
\phantom{1}5 & 1.246445987635706663907926935585552909367630737102937503163072379765170177658\\
\phantom{1}6 & 1.632924184047558960878811524399412257744938578936626397410732546307767883516\\
\phantom{1}7 & 2.215335008390427051007858142854877211061180001285436481704877646702725059467\\
\phantom{1}8 & 3.092181971180434547537024531611250214836863738665362023886032145176688627847\\
\phantom{1}9 & 4.419361573049694692989749824140763338295447870581682927297936340850651262824\\
10 & 6.443751410804749342223892396954906260665664087034580725980990086289287076882\\ \hline
\end{tabular}
\end{table}

\clearpage

\begin{SCfigure}[0.36][h]
  \includegraphics[width=0.6\textwidth]{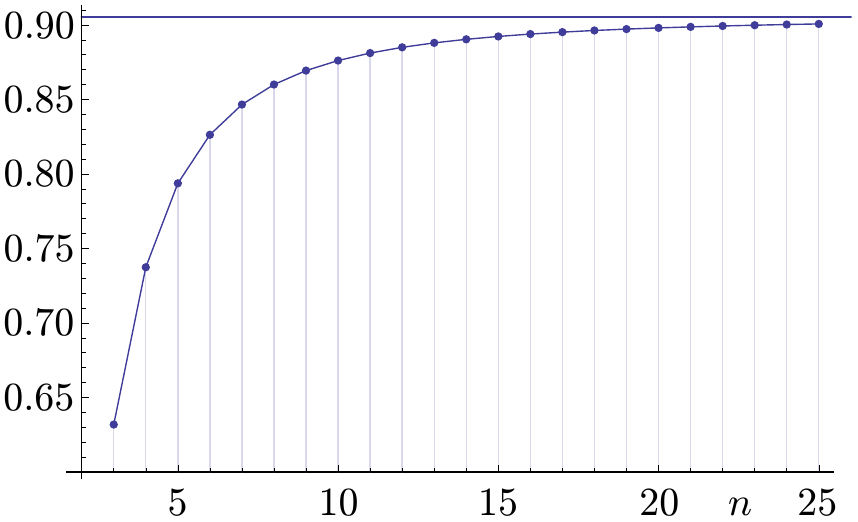}
  \caption{$M_1(\Po)/r$, $n=3,4,\ldots,25$, and $M_1(\Cr)/r$}
  \label{Fig:Diagram_Expected_distance}
\end{SCfigure}
% !TeX root = MDRP_0.tex

\section{Second moment and variance}
\label{Sec:Second_moment}

\begin{thm} \label{Thm:M_2(P_(n,r))}
The second moment of the point distance in $\Po$, $r = 3,4,\ldots$, is given by
\beq
  M_2(\Po)
= \frac{r^2}{3} \left(2 + \cos\frac{2\pi}{n}\right).
\eeq
\end{thm}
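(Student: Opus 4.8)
The plan is to prove a remarkably clean formula,
\beq
  M_2(\Po) = \frac{r^2}{3}\left(2 + \cos\frac{2\pi}{n}\right),
\eeq
by avoiding the heavy machinery of Theorem~\ref{Thm:Moments} entirely and instead exploiting the probabilistic meaning of the second moment together with the symmetry of the regular polygon. The key observation is that if $P = (P_x,P_y)$ and $Q=(Q_x,Q_y)$ are the two independent uniform points in $\Po$, then
\beq
  M_2(\Po) = \E\!\left[(P_x-Q_x)^2 + (P_y-Q_y)^2\right]
  = 2\,\Var[P_x] + 2\,\Var[P_y],
\eeq
since $P$ and $Q$ are independent and identically distributed with mean at the centroid $O$ (chosen as origin), so that each coordinate has mean zero and $\E[(P_x-Q_x)^2] = 2\E[P_x^2] = 2\Var[P_x]$. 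Thus the problem reduces to computing the variances of the two coordinates of a single uniform random point in $\Po$.

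Next I would recognize these coordinate variances as normalized second area moments. Writing $I_{yy} = \int_{\Po} x^2\,\dd A$ and $I_{xx} = \int_{\Po} y^2\,\dd A$ for the second moments of area about the axes through the centroid, we have $\Var[P_x] = I_{yy}/\AP$ and $\Var[P_y] = I_{xx}/\AP$, whence
\beq
  M_2(\Po) = \frac{2(I_{xx}+I_{yy})}{\AP} = \frac{2\,I_0}{\AP},
\eeq
where $I_0 = I_{xx}+I_{yy} = \int_{\Po}(x^2+y^2)\,\dd A$ is the polar second moment of area about the centroid. The crucial simplification is that $I_0$ is rotation invariant, so I can compute it by summing over the $n$ congruent isoceles triangles into which $\Po$ decomposes from the center, or equivalently integrate $x^2+y^2 = \rh^2$ in polar coordinates. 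For a single triangular sector spanning angle $2\alpha = 2\pi/n$ with apex at $O$ and outer edge at distance $r\cos\alpha$ from $O$ (the apothem), this is a routine polar integral, and multiplying by $n$ gives $I_0$ in closed form.

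The final step is algebraic: divide $2I_0$ by $\AP = \tfrac12 n r^2\sin(2\alpha)$ from \eqref{Eq:L_and_A} and simplify, using $\alpha = \pi/n$ and the identity $\cos(2\alpha) = \cos(2\pi/n)$, to land on $\tfrac{r^2}{3}(2+\cos(2\pi/n))$. I expect the main obstacle to be purely computational rather than conceptual---carefully evaluating the sector integral of $\rh^2$ over a triangle whose outer boundary is a straight chord (so the radial upper limit depends on the polar angle as $r\cos\alpha/\cos\theta$ for $\theta\in[-\alpha,\alpha]$) and then coaxing the resulting mix of $\sec$ and $\tan$ terms into the stated trigonometric form; the paper signals that this clean answer is attributable to \textcite{Czuber1884}, which suggests the intended route indeed goes through the moment-of-area interpretation rather than through the general moment machinery.
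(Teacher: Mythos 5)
Your proposal is correct and follows essentially the same route as the paper: reduce $M_2(\Po)$ to $2I_p(\Po)/\AP$ and then evaluate the polar second moment of area by decomposing $\Po$ into congruent sectors with radial boundary $r\cos\alpha/\cos\theta$, which leads to exactly the $\sec^4$ integral the paper computes. The only difference is that you derive the identity $M_2(\Po)=2I_p(\Po)/\AP$ from scratch via $\E\big[(P_x-Q_x)^2\big]=2\Var[P_x]$ and the rotation invariance of $I_{xx}+I_{yy}$, whereas the paper simply cites it from Czuber; this makes your argument self-contained at no real extra cost.
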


\begin{proof}
Due to \textcite[p.\ 217, Problem XXVII]{Czuber1884} we have
\beqn \label{Eq:Problem_XXVII}
  M_2(\Po)
= \frac{2I_p(\Po)}{A(\Po)}  
\eeqn
where $I_p(\Po)$ is the polar moment of area of $\Po$ defined by
\beq
  I_p(\Po) = \int_{\Po} \left(x^2+y^2\right) \dd A
\eeq
whereby it is again assumed that the centroid of $\Po$ is located at the origin $O$ of the $x,y$-coordinate system, and the vertex $X_0$ lies on the positive $x$-axis (see \eqref{Eq:X_k}).
Now we rotate $\Po$ about $O$ with angle $-\alpha$ so that the side $\overline{X_0X_1}$ is parallel to the $y$-axis.
Using polar coordinates $\ph$ and $\rh$ we have $\rh^2 = x^2 + y^2$ and $\dd A = \rh\,\dd\ph\,\dd\rh$, we get
\beq
  I_p(\Po)
= \int_{\ph=0}^{2\pi}\int_{\rh=0}^{R(\ph)} \rh^3\,\dd\rh\,\dd\ph 
\eeq
where $R(\ph)$ is the equation of $\partial\:\!\Po$, and further
\begin{align*}
  I_p(\Po)
= {} & 2n \int_{\ph=0}^{\pi/n}\dd\ph \int_{\rh=0}^{R(\ph)}\rh^3\,\dd\rh
= \frac{n}{2} \int_{\ph=0}^{\pi/n}R(\ph)^4\,\dd\ph\db\\[0.1cm]
= {} & \frac{nr^4}{2}\cos^4(\pi/n) \int_{\ph=0}^{\pi/n}\left(1+\tan^2\ph\right)^2\dd\ph\db\\[0.1cm]
= {} & \frac{nr^4}{2}\cos^4(\pi/n) \int_{\ph=0}^{\pi/n}\frac{\dd\ph}{\cos^4\ph}\,.
\end{align*}
With Eq.\ 2.526.12 in \textcite[Vol.\ 1, p.\ 173]{Gradstein&Ryshik-engl} we get
\begin{align} \label{Eq:I_p}
  I_p(\Po)
= {} & \frac{nr^4}{2}\cos^4(\pi/n) \left(\frac{\sin(\pi/n)}{3\cos^3(\pi/n)} + \frac{2}{3}\tan(\pi/n)\right)
  \nonumber\db\\[0.1cm]   
= {} & \frac{nr^4}{6}\cos^4(\pi/n)\, \frac{\sin(\pi/n)+2\sin(\pi/n)\cos^2(\pi/n)}{\cos^3(\pi/n)}
  \nonumber\db\\[0.1cm]
= {} & \frac{nr^4}{6}\cos(\pi/n)\sin(\pi/n) \left[1+2\cos^2(\pi/n)\right]\nonumber\db\\[0.1cm]
= {} & \frac{nr^4}{6}\cos(\pi/n)\sin(\pi/n) \left[2+\cos(2\pi/n)\right].
\end{align}
Now from \eqref{Eq:Problem_XXVII} with the area
\beqn \label{Eq:A}
  A(\Po) = nr^2\cos(\pi/n)\sin(\pi/n)
\eeqn
(see also \eqref{Eq:L_and_A}) it follows that
\beq
  M_2(\Po)
= \frac{r^2}{3} \left(2 + \cos\frac{2\pi}{n}\right). \qedhere
\eeq
\end{proof}

\begin{remark}
We get
\beq
  \lim_{n\rightarrow\infty}M_2(\Po)
= \frac{r^2}{3}\, (2+1)
= r^2  
\eeq
which is the result for the circle with radius $r$ (see Table \ref{Ta:M_m(C_r)}). 
\end{remark}

\begin{cor}
The variance of the distance $\D_{n,\,r}$ between two random points in a regular polygon is given by
\beq
  \Var[\D_{n,\,r}]
= \E\big[\D_{n,\,r}^2\big] - \E[\D_{n,\,r}]^2
= M_2(\Po) - M_1(\Po)^2
\eeq
with $M_1(\Po)$ according to Corollary \ref{Cor:MD}, and $M_2(\Po)$ according to Theorem \ref{Thm:M_2(P_(n,r))}.
\end{cor}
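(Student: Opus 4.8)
The plan is to invoke the elementary definition of variance and then substitute the two quantities that the paper has already computed. Writing $X = \D_{n,\,r}$, the variance is by definition $\Var[X] = \E[(X - \E[X])^2]$, and expanding the square together with linearity of expectation gives the standard identity $\Var[X] = \E[X^2] - (\E[X])^2$. This identity holds for any random variable with finite second moment, and here $\D_{n,\,r}$ is bounded (it takes values in $[0,d]$ with $d = \ell_{K+1}$), so all moments are finite and no integrability concerns arise.

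The second step is purely a matter of identification. By the definition $M_m(\Po) = \E[\D_{n,\,r}^m]$ stated in the introduction, the first moment is $\E[\D_{n,\,r}] = M_1(\Po)$ and the second moment is $\E[\D_{n,\,r}^2] = M_2(\Po)$. Substituting these into the variance identity yields $\Var[\D_{n,\,r}] = M_2(\Po) - M_1(\Po)^2$, which is exactly the claimed expression.

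Finally, to make the formula fully explicit one feeds in the two closed-form results established earlier: $M_2(\Po) = \tfrac{r^2}{3}\bigl(2 + \cos(2\pi/n)\bigr)$ from Theorem \ref{Thm:M_2(P_(n,r))}, and $M_1(\Po)$ evaluated via Corollary \ref{Cor:MD} (that is, Theorem \ref{Thm:Moments} specialized to $m=1$). There is essentially no obstacle to overcome: the only content is the decomposition of the variance, and the statement is a direct corollary of the two preceding results rather than an independent computation. The one point worth emphasizing is that the value of Theorem \ref{Thm:M_2(P_(n,r))} is precisely that it supplies a \emph{short} closed form for $M_2$, so that whenever $M_1(\Po)$ admits a closed form (as it does for $n = 3,4,5,6,8,10,12$), the variance inherits a closed form as well.
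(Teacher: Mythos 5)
Your proposal is correct and matches the paper's (implicit) reasoning exactly: the corollary is stated without a written proof precisely because it follows from the standard identity $\Var[X]=\E[X^2]-(\E[X])^2$ for a bounded random variable, combined with the identifications $\E[\D_{n,\,r}]=M_1(\Po)$ and $\E[\D_{n,\,r}^2]=M_2(\Po)$ and the closed forms from Corollary \ref{Cor:MD} and Theorem \ref{Thm:M_2(P_(n,r))}. Nothing is missing.
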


Fig.\ \ref{Fig:Diagram_Variance} shows how the variances converge against the variance for the circle $\Cr$ with radius $r$ (see \eqref{Eq:Variance_Circle}).

\begin{SCfigure}[0.38][h]
  \includegraphics[width=0.6\textwidth]{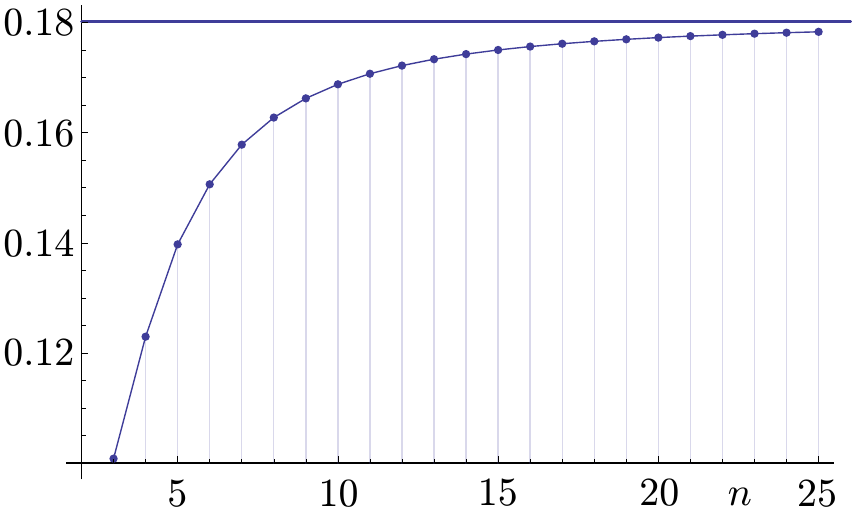}
  \caption{$\Var[\D_{n,\:\!r}]/r^2$, $n=3,4,\ldots,25$, and $\Var[\D_r]/r^2$}
  \label{Fig:Diagram_Variance}
\end{SCfigure}
% !TeX root = MDRP_0.tex

\section{Fourth moment}
\label{Sec:Fourth_moment}

\begin{thm} \label{Thm:M_4(P_(n,r))}
The fourth moment of the point distance in $\Po$, $r = 3,4,\ldots$, is given by
\beq
  M_4(\Po)
= \frac{r^4}{90} \left(77 + 64\cos\frac{2\pi}{n} + 9\cos\frac{4\pi}{n}\right).
\eeq
\end{thm}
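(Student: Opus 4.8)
The plan is to run the same argument that proved Theorem~\ref{Thm:M_2(P_(n,r))}, but with the second-moment identity of Czuber replaced by the fourth-moment formula of \textcite{Voss82}. Writing $P_1=(x_1,y_1)$ and $P_2=(x_2,y_2)$ for the two independent uniform points and expanding
$\D_{n,\:\!r}^4=\big((x_1-x_2)^2+(y_1-y_2)^2\big)^2$, I would first reduce $M_4(\Po)=\E[\D_{n,\:\!r}^4]$ to the normalized area moments $\mu_{jk}:=\AP^{-1}\int_\Po x^jy^k\,\dd A$ of order at most four. Since the centroid lies at $O$ we have $\mu_{10}=\mu_{01}=0$, and taking expectations term by term (using independence of $P_1,P_2$) gives
\beq
  M_4(\Po)=2\mu_{40}+4\mu_{22}+2\mu_{04}
  +6\mu_{20}^2+4\mu_{20}\mu_{02}+6\mu_{02}^2+8\mu_{11}^2 .
\eeq
This is exactly Voss's identity, and it is the only probabilistic input.

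Next I would invoke the stated fact that the second area moment of $\Po$ about any centroidal axis is independent of the direction of that axis. For the axis through $O$ at angle $\theta$ the squared distance is $x^2\sin^2\theta-2xy\sin\theta\cos\theta+y^2\cos^2\theta$, so direction-independence is equivalent to the isotropy relations $\mu_{20}=\mu_{02}$ and $\mu_{11}=0$, whence $\mu_{20}=\mu_{02}=I_p(\Po)/(2\AP)$. The three second-order terms then collapse to $16\mu_{20}^2=4\,(I_p(\Po)/\AP)^2=M_2(\Po)^2$, using $M_2(\Po)=2I_p(\Po)/\AP$ from Theorem~\ref{Thm:M_2(P_(n,r))}. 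The remaining, genuinely fourth-order, part reassembles into the rotation-invariant polar moment $J:=\int_\Po(x^2+y^2)^2\,\dd A$ because $2\mu_{40}+4\mu_{22}+2\mu_{04}=2J/\AP$, giving the compact intermediate identity $M_4(\Po)=2J/\AP+M_2(\Po)^2$.

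It then remains to evaluate $J$, and here I would copy the polar-coordinate computation of $I_p(\Po)$ verbatim: rotate $\Po$ so that $\overline{X_0X_1}$ is parallel to the $y$-axis, use the $n$-fold symmetry to restrict to the wedge $0\le\ph\le\pi/n$ where $R(\ph)=r\cos(\pi/n)\sec\ph$, and compute
\beq
  J=\frac{n}{3}\int_0^{\pi/n}R(\ph)^6\,\dd\ph
   =\frac{nr^6\cos^6(\pi/n)}{3}\int_0^{\pi/n}\sec^6\ph\,\dd\ph ,
\eeq
where $\int\sec^6\ph\,\dd\ph=\tan\ph+\tfrac23\tan^3\ph+\tfrac15\tan^5\ph$ by the same substitution $t=\tan\ph$ used for $\sec^4$ in the $M_2$ proof. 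Dividing by $\AP=nr^2\cos(\pi/n)\sin(\pi/n)$ yields
$J/\AP=\tfrac{r^4}{45}\big(15\cos^4(\pi/n)+10\sin^2(\pi/n)\cos^2(\pi/n)+3\sin^4(\pi/n)\big)$.

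Finally I would substitute this together with $M_2(\Po)=\tfrac{r^2}{3}\big(2+\cos\tfrac{2\pi}{n}\big)$ into $M_4(\Po)=2J/\AP+M_2(\Po)^2$ and simplify with $\cos\tfrac{2\pi}{n}=2\cos^2(\pi/n)-1$ and $\cos\tfrac{4\pi}{n}=2\cos^2\tfrac{2\pi}{n}-1$; the powers of $\cos(\pi/n),\sin(\pi/n)$ reorganize into $\tfrac{r^4}{45}\big(34+32\cos\tfrac{2\pi}{n}+9\cos^2\tfrac{2\pi}{n}\big)$, and replacing $9\cos^2\tfrac{2\pi}{n}=\tfrac92\big(1+\cos\tfrac{4\pi}{n}\big)$ gives the claimed $\tfrac{r^4}{90}\big(77+64\cos\tfrac{2\pi}{n}+9\cos\tfrac{4\pi}{n}\big)$. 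The main obstacle is the first two steps: correctly expanding $\D^4$, matching the resulting moment combination to Voss's identity, and recognizing that the order-four contribution is precisely the rotation-invariant $J$ while the order-two contribution is exactly $M_2^2$. Everything after that is the same wedge integral as for $M_2$ (now with $\sec^6$ in place of $\sec^4$) followed by routine trigonometric bookkeeping; as a check, $n\to\infty$ gives $2(r^4/3)+r^4=\tfrac53 r^4$, matching $\tfrac{r^4}{90}(77+64+9)$.
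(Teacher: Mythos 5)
Your proposal is correct and lands on exactly the same intermediate identity as the paper, namely $M_4(\Po)=2I_p^{(4)}(\Po)/\AP+4I_p(\Po)^2/\AP^2$ with $I_p^{(4)}(\Po)=\int_{\Po}(x^2+y^2)^2\,\dd A$, but you reach it by a genuinely different (and more self-contained) route. The paper obtains this identity by combining the chord-power/distance-power relation $T_m=\tfrac{2}{(m+2)(m+3)}S_{m+3}$ with Voss's closed formula $S_7=42\bigl[A\,I_p^{(4)}+I_p^2+2\bigl(I_x^2+I_y^2\bigr)\bigr]$, i.e.\ it imports the key identity from integral geometry; you instead rederive it elementarily by expanding $\E\bigl[\bigl((x_1-x_2)^2+(y_1-y_2)^2\bigr)^2\bigr]$ via independence and the vanishing of the first moments, which reproduces Voss's identity together with the extra term $8\mu_{11}^2$ that Voss suppresses by assuming principal axes. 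Your expansion checks out term by term, and your observation that the order-two part collapses to $M_2(\Po)^2$ while the order-four part is the rotation-invariant $J=I_p^{(4)}(\Po)$ is a cleaner packaging than the paper's. From that point on the two arguments coincide: both need the direction-independence of the centroidal second area moment (equivalently $\mu_{20}=\mu_{02}$, $\mu_{11}=0$), which you invoke but do not justify — the paper supplies the justification (a regular polygon has more than two non-perpendicular symmetry axes, all of which are principal axes, forcing the two principal second moments to coincide), and you should include that half-sentence to make the step airtight — and both evaluate the same wedge integral $\tfrac{n}{3}\int_0^{\pi/n}R(\ph)^6\,\dd\ph$ with $\sec^6$ in place of $\sec^4$. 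Your final trigonometric simplification and the $n\to\infty$ sanity check $\tfrac53 r^4$ agree with the paper. What your route buys is independence from the chord-power-integral machinery and from the external reference for $S_7$; what the paper's route buys is that the same template ($M_m$ from $S_{m+3}$) generalizes to other moments and other convex bodies where Voss-type formulas are available.
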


\begin{proof}
From \eqref{Eq:S_m-T_m-a} together with the second formula in \eqref{Eq:S_m-T_m-b} we get
\beq
  M_4(\Po)
= \frac{S_7(\Po)}{21A(\Po)^2}\,.   
\eeq
According to \textcite[Eq.\ (16)]{Voss82} we can compute $S_7$ with
\beq
  S_7(\Po) = 42\left[A(\Po)I_p^{(4)}(\Po) + I_p(\Po)^2 + 2\left(I_x(\Po)^2+I_y(\Po)^2\right)\right]
\eeq
where
\begin{gather*}
  I_p^{(4)}(\Po)
= \int_{\Po} \left(x^2+y^2\right)^2 \dd A
= \frac{1}{6} \int_0^{2\pi} R(\ph)^6\, \dd\ph\,,\\[0.15cm]
  I_x(\Po)
= \int_{\Po} y^2\, \dd A\,,\qquad
  I_y(\Po)
= \int_{\Po} x^2\, \dd A
\end{gather*}
whereby the centroid of $\Po$ is located at the origin $O$ of the $x,y$-coordinate system and the axes are the principal axes of intertia. ($I_x$ und $I_y$ are the second moments of area with respect to the $x$-axis and the $y$-axis, respectively.)
So we have
\beqn \label{Eq:M_4(P_(n,r))}
  M_4(\Po)
= \frac{2I_p^{(4)}(\Po)}{A(\Po)} + \frac{2I_p(\Po)^2}{A(\Po)^2}
  + \frac{4\left(I_x(\Po)^2+I_y(\Po)^2\right)}{A(\Po)^2}\,. 
\eeqn
We get
\begin{align*}
  I_p^{(4)}(\Po)
= {} & \frac{n}{3} \int_0^{\pi/n}R(\ph)^6\,\dd\ph
= \frac{nr^6}{3}\cos^6(\pi/n) \int_0^{\pi/n}\left(1+\tan^2\ph\right)^3\dd\ph\db\\[0.1cm]
= {} & \frac{nr^6}{3}\cos^6(\pi/n) \int_0^{\pi/n}\frac{\dd\ph}{\cos^6\ph}\,,
\end{align*}
and, with Eq.\ 2.526.14 in \textcite[Vol.\ 1, p.\ 174]{Gradstein&Ryshik-engl},
\begin{align*}
  I_p^{(4)}(\Po)
= {} & \frac{nr^6}{3}\cos^6(\pi/n) \left(\frac{\sin(\pi/n)}{5\cos^5(\pi/n)} + \frac{4}{15}\tan^3(\pi/n)
  + \frac{4}{5}\tan(\pi/n)\right)\db\\[0.1cm]   
= {} & \frac{nr^6}{45}\cos(\pi/n)\sin(\pi/n) \left[3 + 4\sin^2(\pi/n)\cos^2(\pi/n)
  + 12\cos^4(\pi/n)\right]\db\\[0.1cm]   
= {} & \frac{nr^6}{45}\cos(\pi/n)\sin(\pi/n) \left[8 + 6\cos(2\pi/n) + \cos(4\pi/n)\right],
\end{align*}
and, with the area \eqref{Eq:A},
\beqn \label{Eq:Term_1}
  \frac{2I_p^{(4)}(\Po)}{A(\Po)}
= \frac{2r^4}{45} \left[8 + 6\,\cos(2\pi/n) + \cos(4\pi/n)\right]. 
\eeqn
With \eqref{Eq:I_p} we get
\begin{align} \label{Eq:Term_2}
  \frac{2I_p(\Po)^2}{A(\Po)^2}
= {} & 2\,\frac{\left[\frac{1}{6}nr^4\cos(\pi/n)\sin(\pi/n)\, \left(2+\cos(2\pi/n)\right)\right]^2}
	{(nr^2\cos(\pi/n)\sin(\pi/n))^2}
= \frac{r^4}{18} \left[2 + \cos(2\pi/n)\right]^2\nonumber\db\\[0.05cm]
= {} & \frac{r^4}{36} \left[9 + 8\cos(2\pi/n) + \cos(4\pi/n)\right].	  
\end{align}
Now we compute $I_x(\Po)$ and $I_y(\Po)$.
Since the axes of the coordinate system are principal axes of inertia and the origin is located in the centroid of $\Po$, the second area moments $I_x(\Po)$ and $I_y(\Po)$ take extreme values compared to all other second area moments with respect to axes through the centroid \autocite[p.\ 165]{Goeldner&Holzweissig}.
This means that one of the two values $I_x(\Po)$ and $I_y(\Po)$ is the maximum and one is the minimum value.
An axis of symmetry and the axis perpendicular to it are always principal axes of inertia \autocite[p.\ 165]{Goeldner&Holzweissig}.
Each polygon $\Po$, $n = 3,4,\ldots$, has more than two symmetry axes that are not perpendicular to each other.
Therefore it follows that the second area moment is equal for all axes through the centroid.
Thus with $I_x(\Po) + I_y(\Po) = I_p(\Po)$ we have found
\beq
  I_x(\Po) = I_y(\Po)
= \frac{1}{2}\,I_p(\Po)\,,
\eeq
hence
\beq
  \frac{4\left(I_x(\Po)^2+I_y(\Po)^2\right)}{A(\Po)^2}
= \frac{2I_p(\Po)^2}{A(\Po)^2}\,,    
\eeq
and \eqref{Eq:M_4(P_(n,r))} becomes
\beq
  M_4(\Po)
= \frac{2I_p^{(4)}(\Po)}{A(\Po)} + \frac{4I_p(\Po)^2}{A(\Po)^2} 
\eeq
(cp. Eq.\ (17) in \autocite{Voss82}).
So, with \eqref{Eq:Term_1} and \eqref{Eq:Term_2} we finally get
\begin{align*}
  M_4(\Po)
= {} & \frac{2r^4}{45} \left[8 + 6\,\cos(2\pi/n) + \cos(4\pi/n)\right]
  + \frac{r^4}{18} \left[9 + 8\cos(2\pi/n) + \cos(4\pi/n)\right]\\[0.05cm]
= {} & \frac{r^4}{90} \left(77 + 64\cos\frac{2\pi}{n} + 9\cos\frac{4\pi}{n}\right). \qedhere    
\end{align*}
\end{proof}

\begin{remark}
From Theorem \ref{Thm:M_4(P_(n,r))} we get
\beq
  \lim_{n\rightarrow\infty}M_4(\Po)
= \frac{r^4}{90}\, (77+64+9)
= \frac{5}{3}\, r^4  
\eeq
which is the result for the circle with radius $r$ (see Table \ref{Ta:M_m(C_r)}). 
\end{remark}

\begin{remark}
With the side length $\ell_1 = 2r\sin(\pi/n)$ and the area $A(\Po)$ (see \eqref{Eq:A}), the second area moment $I_x(\Po)$ with respect to an axis through the centroid of $\Po$ can be written in the form
\beq
  I_x(\Po) = \frac{A(\Po)}{12} \left(3r^2-\frac{\ell_1^2}{2}\right).
\eeq
This result can already be found in \textcite[p.\ 249]{Huette1872}.
The independence of $I_x(\Po)$ from the direction of the $x$-axis through the centroid is also discussed in \textcite{anderstood&ccorn}.
\end{remark}
% !TeX root = MDRP_0.tex

\section{Circle}
\label{Sec:Circle}

Now, we consider the moments $M_m(\Cr)$, $m = 1,2,\ldots$, of the distance $\D_r$ between two random points in a circle $\Cr$ with radius $r$.

\textcite{Czuber1884} found
\beqn \label{Eq:M_1(C_r)-M_2(C_r)}
  M_1(\Cr) = \frac{128}{45\pi}\,r\quad \mbox{(p.\ 197)}\,,\qquad
  M_2(\Cr) = r^2\quad \mbox{(p.\ 217)}\,.  
\eeqn
A numerical approximation of $M_1(\Cr)$ is given by
\beqn \label{Eq:M_1(C_r)}
  M_1(\Cr) \approx 0.90541478736722679904\,r\,.
\eeqn
A general formula for $M_m(\Cr)$ can be found in \textcite[p.\ 207, Theorem 2.3.15]{Mathai}. 
Here we calculate the moments using \eqref{Eq:M_m}, with $\Cr$ instead of $\Po$, and $2r$ instead of $d$, in the form
\beq
  M_m(\Cr) = \E[\D_r^m]
= \frac{2L(\partial\:\!\Cr)}{(m+2)A(\Cr)^2} \int_0^{2r} x^{m+2} \left(1-F_r(x)\right) \dd x    
\eeq
where $F_r(x)$ is the chord length distribution function of $\Cr$.
One easily finds
\beq
  F_r(x) = 1 - \sqrt{1-\left(\frac{x}{2r}\right)^2}
\eeq
(see also \textcite{Piefke}), hence
\begin{align*}
  M_m(\Cr)
= \frac{4\pi r}{(m+2)\pi^2 r^4} \int_0^{2r} x^{m+2}\, \sqrt{1-\left(\frac{x}{2r}\right)^2}\, \dd x\,.  
\end{align*}
The substitution
\beq
  t = \left(\frac{x}{2r}\right)^2
\eeq
yields
\beq
  M_m(\Cr)
= \frac{2^{m+4}\,r^m}{(m+2)\pi} \int_0^1 t^{(m+1)/2}\,(1-t)^{1/2}\, \dd t\,.  
\eeq
The last integral is the Euler beta function $B((m+3)/2,3/2)$.
Using the relationship to the gamma function $\Ga$,
\beq
  B(x,y) = \frac{\Ga(x)\Ga(y)}{\Ga(x+y)}\,,
\eeq 
and $\Ga(1/2) = \sqrt{\pi}$ one finds
\beqn \label{Eq:M_m(C_r)-2}
  M_m(\Cr)
= \frac{2^{m+4}\,r^m}{\sqrt{\pi}\,(m+2)(m+4)}\,
  \frac{\Ga\!\left(\frac{m+3}{2}\right)}{\Ga\!\left(\frac{m}{2}+2\right)}\,.  
\eeqn
This is Mathai's result.
Special values of \eqref{Eq:M_m(C_r)-2} are to be found in Table \ref{Ta:M_m(C_r)}\,.

\begin{table}[h]
\caption{Special values of $M_m(\Cr)$}
\label{Ta:M_m(C_r)}
\begin{center}
\begin{tabular}{|c|cccccccc|}
\hline
\rule{0pt}{12pt}
$m$ & 1 & 2 & 3 & 4 & 5 & 6 & 7 & 8\\[2pt] \hline
\rule{0pt}{20pt}
$M_m(\Cr)$ & $\dfrac{128}{45\pi}\,r$ & $r^2$ & $\dfrac{2048}{525\pi}\,r^3$ & $\dfrac{5}{3}\,r^4$ &
$\dfrac{16384}{2205\pi}\,r^5$ & $\dfrac{7}{2}\,r^6$ & $\dfrac{524288}{31185\pi}\,r^7$ &
$\dfrac{42}{5}\,r^8$\\[10pt] \hline
\rule{0pt}{12pt}
Eq. & \eqref{Eq:M_1(C_r)-M_2(C_r)} & \eqref{Eq:M_1(C_r)-M_2(C_r)} & & & & & &\\[2pt] \hline
\end{tabular}
\end{center} 
\end{table}
For the variance we have
\beqn \label{Eq:Variance_Circle}
\begin{aligned}
  \Var[\D_r]
= {} & \E\big[\D_r^2\big] - \E[\D_r]^2
= M_2(\Cr) - M_1(\Cr)^2
= \left[1-\left(\frac{128}{45\pi}\right)^{\!2}\right]r^2\\
\approx {} & 0.18022406281675948280\,r^2\,.   
\end{aligned}
\eeqn
% !TeX root = MDRP_0.tex

\section{Chord power integrals}
\label{Sec:CPI}

From \eqref{Eq:S_m-T_m-a} with the second formula in \eqref{Eq:S_m-T_m-b} one gets
\beq
  S_m(\Po)
= \frac{m(m-1)}{2}\,T_{m-3}(\Po)
= \frac{m(m-1)}{2}\,\AP^2\,M_{m-3}(\Po)\,,\quad m = 2,3,4,\ldots,   
\eeq
(see also \textcite[p.\ 47]{Santalo}).
Thus it is easy to compute chord power integrals $S_m(\Po)$ with the results of the present paper.
One obtains e.g.\ for the second chord power integral of $\mathcal{P}_{6,\:\!r}$
\begin{align*}
  S_2(\mathcal{P}_{6,\:\!r})
= A(\mathcal{P}_{6,\:\!r})^2\,M_{-1}(\mathcal{P}_{6,\:\!r})
= {} & \frac{27r^4}{4}
  \left(-\frac{16}{9} + \frac{8}{3\sqrt{3}} + \frac{22\ln 3}{9}- \frac{4\ln\left(2+\sqrt{3}\right)}{9}\right)
  \frac{1}{r}\\[0.02cm] 
= {} & \frac{27r^3}{4}
  \left(-\frac{16}{9} + \frac{8}{3\sqrt{3}} + \frac{22\ln 3}{9}- \frac{4\ln\left(2+\sqrt{3}\right)}{9}\right)
  \\[0.02cm]
\approx {} & 12.568534\,r^3  
\end{align*}
(cf.\ \textcite{Heinrich2009}). 

% !TeX root = MDRP_0.tex

%\section*{Ideen}
%\addcontentsline{toc}{section}{Ideen}
%
%\begin{itemize}[leftmargin=0.5cm]
%\setlength{\itemsep}{-1pt}
%%%%%%%%%%%
%\item bezogene Dichtefunktionen $F(x) = F(x/r)$, $f(x) = F'(x) = F'(x/r) = f(x/r)/r \quad\Longrightarrow\quad f(x/r) = r \times f(x)$
%%%%%%%%%%%
%\item Chord20g.nb: Momente des Punktabstandes: numerisch und analytisch\\
%Chord21c.nb: Punktabstandsdichtefunktion und numerische Momente fuer regul\"ares Polygon\\
%Chord24a.nb: Sehnenlängedichtefunktion und numerische Sehnenl\"angenmomente f\"ur regul\"ares Polygon -- neue, klarere Darstellung\\
%Chord25.nb: Closed form expressions\\
%Chord26.nb: Sehnenlaengenverteilungs- und -dichtefunktion\\
%Chord27.nb: Sehnenlaengendichtefunktion und Moments der Sehnenlaenge
%\end{itemize}

\addcontentsline{toc}{section}{References}
\printbibliography[title={References}]

\bigskip
{\bf Uwe Bäsel}, Leipzig University of Applied Sciences (HTWK Leipzig), Faculty of Engineering, PF 30\,11\,66, 04251 Leipzig, Germany, \texttt{uwe.baesel@htwk-leipzig.de}
  
\end{document}